\newcommand{\jc}[1]{{{\small \color{red}\sc  (#1)}}}
\newcommand{\aal}{\boldsymbol \alpha}
\newcommand{\mA}{\mbox{$\mathcal A$}}
\newcommand{\pp}{\mbox{$\mathbf p$}}
\newcommand{\qq}{\mbox{$\mathbf q$}}
\def\uu{\mbox{$\mathbf u$}}
\newcommand{\YY}{\mbox{$\mathbf Y$}}
\newcommand{\yy}{\mbox{$\mathbf y$}}
\newtheorem{theorem}{Theorem}
\newtheorem{corollary}{Corollary}
\newtheorem{definition}{Definition}
\newtheorem{example}{Example}
\newtheorem{lemma}{Lemma}
\newenvironment{proof}[1][Proof]{\noindent\textbf{#1.} }{\ \rule{0.5em}{0.5em}}
\title{On the Identifiability of Diagnostic Classification Models}
\author{Guanhua Fang, Jingchen Liu, and Zhiliang Ying
}
\begin{document}
\maketitle

\baselineskip=25pt

\begin{abstract}
	This paper establishes fundamental results for statistical inference of diagnostic classification models (DCM).
	The results are developed at a high level of generality, applicable to essentially all diagnostic classification models.
	In particular, we establish identifiability results of various modeling parameters, notably item response probabilities, attribute distribution, and $Q$-matrix-induced partial information structure.
    Consistent estimators are constructed.
    Simulation results show that these estimators perform well under various modeling settings.
    We also use a real example to illustrate the new method.
	The results are stated under the setting of general latent class models. For DCM with a specific parameterization,  the conditions may be adapted accordingly.
\end{abstract}

\section{Introduction}

Cognitive diagnosis is an important area which has become increasingly popular in educational assessment, psychiatric
evaluation, and many other disciplines.
An incomplete list of recent developments of cognitive diagnosis models include the rule space method \cite{Tatsuoka1985, Tatsuoka}, the reparameterized unified/fusion model (RUM) \cite{DiBello, Hartz, He}, the conjunctive (noncompensatory) DINA and NIDA models \cite{Junker, TatsuokaC, dela, Templin2006}, the compensatory DINO and NIDO models \cite{Templin, Templin2006}, the attribute hierarchy method \cite{AHM},  clustering methods \cite{Chiu}, and the G-DINA model \cite{delaTorre}; see also \cite{Junker0, von, Rupp} for more approaches to cognitive diagnosis.
	A comprehensive review of diagnostic classification models can be found in \cite{Rupp2008}. Statistical inference methods have also been developed for these models, such as item parameter estimation/calibration  \cite{RoussosTH, Stout2007},
	$Q$-matrix estimation \cite{Chen, CDCAT, DATAdriven, Selflearn}.

	The primary focus of this paper is to develop a theoretical foundation for statistical inferences of diagnostic classification models.
	More precisely, we present identifiability results for various model parameters. We aim at developing results under a general framework that is applicable to essentially all diagnostic classification models.
	To do so, we cast the diagnostic classification models under the framework of a more general family, i.e. the latent class models, and consider DCMs as latent class models with special parameterizations or constraints. 
	
	Parameter identifiability is an important and long-standing issue in latent variable/factor models.
	This is mostly due to the fact that latent variables are not directly observed and the observed data provide limited information of the model parameters. Nonidentifiability is often present in such models when proper constrains are not imposed.
	In exploratory factor analysis, such as multidimensional item response theory models, the factor loading matrix is  identifiable  only up to a rotation and a reflection, which is mathematically equivalent to performing an orthogonal transformation of the continuous latent factors (J\"{o}reskog \cite{Joreskog}).
	In the context of cognitive diagnosis, the latent variables are discrete and cannot be rotated or reflected. Nonetheless, identifiability issues remain an open problem for general DCMs.
	In this paper, we develop sufficient conditions, under which the model parameters can be consistently estimated based on the data, in most cases, by the maximum likelihood estimator.
	
	The technical discussion falls into two parts. We first consider the parameters for the item response functions and the attribute population.
	These parameters live on a continuous space and are considered as ``regular'' parameters compared to the item partial information that is described in the subsequent discussion.
	In the literature of latent class models, there are existing identifiability results in this regard. For instance, recent work in \cite{Allman} discusses \emph{generic identifiability} of general latent class models, hidden Markov models, and several other models with latent variables.
	The identifiability results are \emph{generic} in the sense that they hold when the parameters do not lie on some measure zero set. This set is never clearly specified either in the statements or in the technical proofs.
	As we will discuss in the sequel, the parameters of diagnostic classification models always live on a low-dimensional manifold that is indeed a measure zero set. Therefore, the results in \cite{Allman} cannot be applied in this context.
	The results of this paper stand on the fundamental results of Kruskal \cite{Kruskal} concerning the rank of three-dimensional arrays.
	We investigate the special structures of diagnostic classification models, adapt Kruskal's analysis to this particular context, and develop identifiability results.

	In the analysis of diagnostic classification models, Bayesian models and inference are popular. Technically speaking, model identifiability  is a lesser issue for Bayesian inference that is valid as long as the posterior distribution is well defined.
	On the other hand, this seemingly plausible solution in fact could be misleading in that, in case of nonidentifiability, the likelihood function does not provide information to differentiate among certain parameters and thus it all relies on the prior information that is mostly subjective.
    Computational issues may also exist under this situation.
	Thus, identifiability results are of importance to Bayesian models as well.

	Besides developing identifiability results for the item parameters, we also provide discussion of the $Q$-matrix that is a key quantity in the specification of diagnostic classification models.
	The $Q$-matrix provides a qualitative description of the item-attribute relationship. However, this precise relationship depends on the specific model parameterization and the interaction among attributes.
	This paper develops general results applicable to essentially all DCMs instead of specific parametric models. We consider a slightly different estimand implied by the $Q$-matrix, the partial information structure for each item that will be defined in the subsequent discussion.
	We develop identifiability results for the partial information structure for each item. Under specific model parameterizations, the $Q$-matrix can also be reconstructed based on the estimated partial information for each item. This will be illustrated via a real data analysis.

	To illustrate the identifiability results, we consider estimation of the item response parameters, the attribute distribution, and the partial information structure under a latent class model with Dirichlet allocation.
	This model is proposed for exploratory analysis. It contains infinitely many mixtures and thus is a saturated and over-parameterized model.
	To regularize the overfitting (the number of latent classes), the model adopts a prior distribution for the attribute population via the stick-breaking representation that is originated from the derivation of the Dirichlet processes.
	In the analysis, we adopt a full Bayesian setting. The posterior distribution can be obtained by means of a Gibbs sampling scheme without truncation to a finite mixture model.
	An estimator of the partial information structure is proposed based on a clustering algorithm combined with a Bayesian estimator.
	We illustrate the identifiability results by fitting this model to simulated data generated from several diagnostic classification models and a real data set. For the latter, we reconstructed a  parametric loading structures  and its $Q$-matrix.

	The rest is organized as follows. In Section \ref{SecPre}, we present basic concepts  including latent class models, diagnostic classification models, identifiability, and estimation consistency.
	In Section \ref{SecId}, the main results are presented including the identifiability of item parameters, attribute distribution, and the partial information structure.
	The latent class model with  Dirichlet allocation along with its inference is presented  in Section \ref{SecModel}.
	Simulation studies and real data analysis are presented in Sections \ref{SecSim} and \ref{SecReal}, respectively.

\section{Preliminaries}\label{SecPre}

\subsection{Latent class models and diagnostic classification models}\label{SecLat}

We start with descriptions of the latent class models and the diagnostic classification models and establish their connections.
Consider a $J$-dimensional multivariate categorical response random vector $\YY= (Y^1,...,Y^J)$. We use subscript to index independent replications, that is, $\YY_i$ and $Y_i^j$. The random vector ($\YY$) or variable ($Y^j$) without a subscript denotes a generic random vector or variable.
Let $Y^j$ be a discrete random variable taking $k_j$ possible values. Without loss of generality, let $Y^j\in \{1,..., k_j\}$.
In the formulation of latent class models, the dependence among $Y^1$,..., $Y^J$ is induced by a discrete latent variable (latent class) $\aal$ taking values in a discrete set $\mA$. We similarly use $\aal_i$ to denote its independent replications.
Conditional on $\aal$, the distribution of $\YY$ falls into some simple form. In this paper, we assume that  $Y^1$, ..., $Y^J$ are conditionally independent given $\aal$, that is,
\begin{equation*}
P(Y^j = y^j, j=1,...,J|\aal) = \prod_{j=1}^J P(Y^j = y^j |\aal).
\end{equation*}
This is known as the \emph{local independence} assumption.
The $\pi_{\aal}$ is the probability mass function of the latent class membership $\aal$.
Suppose conditional distribution is parameterized through
$$P(Y^j = y^j |\aal) = f_{j}(y^j |\theta,\aal),$$
where $\theta_j$ is the item parameter specific to item $j$. Thus, the joint marginal distribution of $\YY$ can be expressed as
\begin{equation}\label{LCM}
P(Y^1 = y^1, ..., Y^J, = y^J) = \sum_{\aal \in \mA}\Big \{ \prod_{j=1}^J P(Y^j = y^j |\aal) \pi_{\aal}\Big \}.
\end{equation}

Diagnostic classification models in general admit the same distribution as in \eqref{LCM}. However, it often imposes additional parametric or low-dimensional structures so that the item response function $f_j$ and the latent variable $\aal$ have practical interpretations.
Here, we adopt the following parameterization.
The latent variable $\aal$ is parameterized by a multidimensional binary or multi-category vector, i.e. $\aal \in  (\alpha^1,...,\alpha^K)$
where  $\alpha^k$ takes $d_k$ possible values. Each $\alpha^k$ is known as an attribute or trait indicating the presence/absence or level of a latent characteristic.

	In addition to the multidimensional structure of the latent class space, the item response function $f_j$ also admits some parametric and usually low-dimensional structures that distinguish themselves from the general latent class models.
	To elaborate on this feature, we need to introduce the $Q$-matrix that is a $J\times K$ matrix taking binary values and indicating the item-attribute association.
Each row of $Q$ corresponds to an item and each column corresponds to an attribute. We write
\begin{equation*}
Q = \left (
\begin{array}{c}
q_1\\
\vdots\\
q_J
\end{array}
\right),
\end{equation*}
where $q_j =(q_{j1},...,q_{jK})$ is a $K$-dimensional row vector. As mentioned previously, $q_{jk} = 1 \textrm{ or } 0$, indicates whether or not $f_j$ depends on $\alpha^k$.
	For instance, if $q_j = (1,1,0)$, then
$$f_j(y|\theta, \aal) = f_j(y | \theta, \alpha^1, \alpha^2).$$
The $Q$-matrix qualitatively describes  the relationship between items and attributes. The specific forms of  $f_j$ as functions of $\aal$ are determined by the parametric model subject to the constraints implied by the $Q$-matrix.
The specification also consists of additional item parameters.
An important feature of the item response function is that $f_j(y| \theta, \aal)$ as a function of $\aal$ usually does not necessarily depend on all $\alpha^k$, $k = 1, \ldots, K$.

\subsection{Examples of diagnostic classification models}

In what follows, we provide a few examples of parametric diagnostic classification models.
\begin{example}[DINA model, \cite{Junker}]\label{DINA}
For item $j$ and attribute vector $\aal$, we define the ideal response
\begin{equation}\label{xidina}
\xi_{DINA}^j(\aal,Q)= \prod_{k=1}^K{(\alpha_{k})}^{q_{jk}} = I (\alpha_k \geq q_{jk}\mbox{ for all }k)
\end{equation}
that is, whether $\aal$ has all the attributes required by item $j$.
For each item, there are two additional parameters $s_j$ and $g_j$, known as the slipping and  guessing parameters. The response probability $p_{j,\aal}$ takes the form
\begin{equation}\label{DDINA}
p_{j,\aal} = (1-s_j)^{\xi_{DINA}^j(\aal,Q)}g_j^{1-\xi_{DINA}^j(\aal,Q)}.
\end{equation}
If $\xi_{DINA}^j(\aal,Q)=1$ (the subject is capable of solving a problem), then the positive response probability is $1-s_j$; otherwise, the probability is $g_j$.
The item parameter vector is $ (s_j,g_j:~ j=1,\cdots,J )$.

The DINA model assumes a conjunctive (non-compensatory) relationship among attributes.
It is necessary to possess all the attributes indicated by the $Q$-matrix to be capable of providing  a positive response. In addition, having additional unnecessary attributes does not compensate for the lack of necessary attributes.
The DINA model is popular in the educational testing applications and is often used for modeling exam problem solving processes.
\end{example}

\begin{example}[NIDA model]\label{NIDA}
The NIDA model admits the following form
$$p_{j,\aal} = \prod_{k=1}^K [(1-s_k)^{\alpha_k} g_k ^{1-\alpha_k}]^{q_{jk}}.$$
The problem solving involves multiple skills indicated by the $Q$-matrix. For each skill, the student has a certain probability of implementing it: $1-s_j$ for mastery and $g_j$ for non-mastery. The problem is solved correctly if all required skills have been implemented correctly by the student, which leads to the above positive response probability.
\end{example}

The following reduced NC-RUM model is also a conjunctive model, and it generalizes the DINA and the NIDA models by allowing the item parameters to vary among attributes.

\begin{example}[Reduced NC-RUM model]\label{RUM}
Under the reduced noncompensatory reparameterized unified model (NC-RUM), we have \begin{equation}
p_{j,\aal} = \phi_j \prod_{k=1}^K(r_{jk})^{q_{jk}(1-\alpha_k)},
\end{equation}
where $\phi_j$ is the correct response probability for subjects who possess all required attributes and $r_{j,k}$, $0<r_{j,k}<1$, is the penalty parameter for not possessing the $k$th attribute. The corresponding item parameters are
 $(\phi_j, r_{j,k}: j=1,\cdots,J, k=1,\cdots,K).$
\end{example}

In contrast to the  DINA, NIDA, and Reduced NC-RUM models, the following DINO and C-RUM models assume compensatory (non-conjunctive) relationship among attributes, that is, one only needs to possess one of the required attributes  to be capable of providing a positive response.

\begin{example}[DINO model]\label{DINO}
The ideal response of the DINO model is given  by
\begin{equation}\label{xidino}
\xi_{DINO}^j(\aal,Q)= 1- \prod_{k=1}^K(1-\alpha_{k})^{q_{jk}} =  I(\alpha_k \geq q_{jk}\mbox{ for at least one }k).
\end{equation}
Similar to the DINA model, the positive response probability is
$$p_{j,\aal} = (1-s_j)^{\xi_{DINO}^j(\aal,Q)}g_j^{1-\xi_{DINO}^j(\aal,Q)}.$$
The DINO model is the dual model of the DINA model.
	The DINO model is often used in the application of psychiatric assessment, for which the positive response to a diagnostic question (item) could be due to the presence of one disorder (attributes) among several.
\end{example}

%

\begin{example}[C-RUM model]\label{CRUM}
The GLM-type parametrization with a logistic link function is used for the compensatory reparameterized unified model (C-RUM), that is
\begin{equation}\label{crum}
p_{j,\aal} = \frac{\exp(\beta^j_{0}+\sum_{k=1}^K\beta^j_{k}q_{jk}\alpha_{k})}{1+\exp(\beta^j_{0}
+ \sum_{k=1}^K\beta^j_{k}q_{jk}\alpha_{k})}.
\end{equation}
The corresponding item parameter vector is  $(\beta^j_{k}: j=1,\cdots,J, k=0,\cdots,K).$
The C-RUM model is a compensatory model and one can recognize (\ref{crum}) as a structure in multidimensional IRT model or in factor analysis.
\end{example}

\subsection{Identifiability of model parameters}

We discuss the identifiability for two types of parameters separately: 1. item parameters and the attribute distribution, 2. item partial information and the $Q$-matrix. Let $\theta$ denote the vector of all item parameters and the attribute distribution. Its identifiability is defined as follows.
\begin{definition}\label{DefId}
The parameter $\theta$ is said to be identifiable if, for any $\theta' \neq \theta$, the resulting marginal distributions of the responses $\YY$ in \eqref{LCM} are distinct.
\end{definition}
If the parameter $\theta$ is identifiable, then, thanks to the entropy inequality \cite{Kullback} and under very mild conditions, the maximum likelihood estimator is consistent.

The identifiability of the $Q$-matrix is different from that of the regular item parameters.
In what follows, we provide some discussions on the estimation of the $Q$-matrix for a generic diagnostic classification model. One may view this problem from different perspectives. The most straightforward approach is to treat $Q$ as part of the model parameters and to consider it as a usual estimation problem. This is often difficult from the computational aspect in that $Q$ is a discrete matrix living on a high dimensional space, in particular, $Q\in \{0,1\}^{J\times K}$. Even with a reasonably small number of items and a few attributes, this space is often too large to explore thoroughly by any existing numerical method as the dimension grows exponentially fast with both $J$ and $K$.
Estimators developed based on this idea, even though theoretically sound, often suffer from substantial computational overhead.
	One of such instances is the maximum likelihood estimator of $Q$. Generally speaking, optimizing a discrete and nonlinear function over $\{0,1\}^{J\times K}$ is  computationally  intensive and sometimes infeasible. This approach does not take advantage of the special structures of the $Q$-matrix and further of the likelihood function.

A different approach is to cast the $Q$-matrix estimation in the context of variable selection. Consider the item response function $f_j(y |\theta, \aal)$. If both the response $Y^j$ and the latent variable $\aal$ were observed, then the estimation of $Q$ is a regular variable selection problem.
In most situations, $f_j$ takes the form of a generalized linear model, in which the responses to items are the dependent variables, the attributes play the role of covariates, and the item parameters $\theta$ are the regression coefficients.
Thus, the $Q$-matrix estimation is equivalent to a variable selection problem. However, in the context of latent class models, the covariates $\aal$'s are all missing and therefore the task is, rigorously speaking, to select latent variables. Chen et al. \cite{Chen} took this viewpoint and developed estimation methods for the $Q$-matrix via regularized likelihood.

The last approach is similar to the previous one, but is more generic and is the primary focus of the current analysis.
 The introduction of the $Q$-matrix suggests that a single item usually does not provide information to differentiate all dimensions of the attribute profile.
	In particular,  $q_{jk} = 0$ means that item $j$ is irrelevant to attribute $k$.
Under the setting of  latent class models (not necessarily possessing a specific parameterization), this corresponds to an item-specific \emph{partial information structure}. Each particular attribute profile $\aal$ in the DCM parameterization corresponds to one latent class. If an item does not differentiate all dimensions of $\aal$, then some distinct attribute profiles may admit the same response distribution. In other words, there exist $\aal_1 \neq \aal_2$ such that $f_j(y|\theta, \aal_1) = f_j(y|\theta, \aal_2)$  for all $y$.
In this case, responses to this item of subjects in  latent classes $\aal_1$ and $\aal_2$ admit the same probability law.
Thus, each item usually provides partial information of the entire attribute profile. This information structure will be formulated mathematically in the sequel.
Each $Q$-matrix along with a specific model parameterization (such as the DINA and DINO models, etc) maps to a unique item-specific partial information structure.

\section{On the identifiability of diagnostic classification models}\label{SecId}

We present the main identifiability results in this section. In the current formulation, the identifiability of diagnostic classification models consists of two components: 1. the item parameters and the attribute population, 2.  the partial information structure of each item.

\subsection{Identifiability of item parameters and the attribute distribution}

We first present four theorems that are applicable to different situations. We start with the simplest case that the responses are binary and each item only has two possible response distributions.
The binary response settings will be relaxed in subsequent theorems.
The result is applicable to stylized models such as the DINA and the DINO models.

\begin{theorem}\label{ThmBinaryBinary}
We consider the general setting of a latent class model with $M>2$ latent classes.
The responses are binary and take values in $\{0,1\}$.
For each item $j$, let $p_{j\aal} = P(Y_j= 1 | \aal) $. Let $\pi_{\aal}$ be the probability of each latent class. Suppose that the following assumptions are satisfied.
\begin{enumerate}
\item[A1] There exist three non-overlap subsets of items denoted by $I_1$, $I_2$, and $I_3$ such that for each $\aal_1 \neq \aal_2$ and $l=1,2$, and $3$,  the conditional distributions of $(Y_j: j \in I_l)$ on classes $\aal_1$ and $\aal_2$ are distinct.
\item[A2] For each $j\in I_1\cup I_2 \cup I_3$, the response probabilities  $(p_{j\aal}: \aal = 1,..., M)$ take only two possible values.
\item[A3] $\pi_{\aal} > 0$ for $\aal = 1,2, \ldots, M$.
\end{enumerate}
Then, the item parameters $p_{j\aal}$ and the latent class population $\pi_{\aal}$ are identifiable up to a permutation of the class label.
\end{theorem}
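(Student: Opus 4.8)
The plan is to reduce the problem to Kruskal's theorem on the uniqueness of three-way array decompositions. I would first group the $J$ items into the three blocks $I_1, I_2, I_3$ guaranteed by assumption A1, and view the joint distribution of the three block-responses $(\,(Y_j: j\in I_1),\,(Y_j: j\in I_2),\,(Y_j: j\in I_3)\,)$ as a three-dimensional contingency table. By local independence, this table has the decomposition
\begin{equation*}
P\big(\text{block responses} = (u,v,w)\big) = \sum_{\aal=1}^M \pi_{\aal}\, A_{u\aal}\, B_{v\aal}\, C_{w\aal},
\end{equation*}
where $A_{\cdot\aal}$, $B_{\cdot\aal}$, $C_{\cdot\aal}$ are the conditional distributions of the three blocks given latent class $\aal$. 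This is exactly a rank-$M$ candecomp/parafac decomposition with factor matrices $\mathbf{A}, \mathbf{B}, \mathbf{C}$ (absorbing the weights $\pi_\aal$ into, say, $\mathbf{A}$). Kruskal's theorem says such a decomposition is unique up to simultaneous column permutation and rescaling provided $k_{\mathbf{A}} + k_{\mathbf{B}} + k_{\mathbf{C}} \ge 2M + 2$, where $k_{\mathbf{M}}$ denotes the Kruskal rank of $\mathbf{M}$.

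The crux is therefore to verify the Kruskal-rank condition from A1--A3. The key observation is that A1 says the columns of each of $\mathbf{A}, \mathbf{B}, \mathbf{C}$ are pairwise distinct (distinct latent classes give distinct block distributions), and $\mathbf{A}, \mathbf{B}, \mathbf{C}$ are stochastic matrices (columns are probability vectors). A first attempt is to argue directly that pairwise-distinct probability-vector columns already force Kruskal rank at least $2$, and in fact more. Here is where assumption A2 does essential work: for $j \in I_l$ the response probability $p_{j\aal}$ takes only two values, so within block $I_l$ each item induces a two-set partition of the latent classes, and the block distribution $B_{\cdot\aal}$ (say) is determined by which cell of the common refinement $\aal$ falls into; distinctness of the columns (A1) means every latent class sits in its own cell. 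I would show that, because each coordinate is a $\pm$-indicator-type quantity, any two columns are actually affinely independent, and then bootstrap: any $r$ distinct columns of a matrix whose entries per coordinate take two values, with the columns separating all $M$ classes, must have Kruskal rank at least... — and the combinatorial fact I need is that $k_{\mathbf{A}}, k_{\mathbf{B}}, k_{\mathbf{C}}$ are each at least $2$, and at least one can be pushed high enough (or all three are $\ge \lceil \log_2 M\rceil + 1$ or similar) so that the sum reaches $2M+2$. I expect this combinatorial lower bound to be the main obstacle: getting $k_{\mathbf{A}}+k_{\mathbf{B}}+k_{\mathbf{C}}\ge 2M+2$ from ``two-valued, class-separating'' blocks is not automatic and may require the blocks to contain enough items, or a more careful argument exploiting that a set of $M$ pairwise-distinct $\{a,b\}$-valued vectors of a fixed length must include a large affinely independent (hence Kruskal-rank-large) subset.

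Once the Kruskal condition is in place, Kruskal's theorem gives that $\mathbf{A}, \mathbf{B}, \mathbf{C}$ are determined up to a common permutation $\sigma$ of columns and diagonal rescalings $\mathbf{D}_A, \mathbf{D}_B, \mathbf{D}_C$ with $\mathbf{D}_A\mathbf{D}_B\mathbf{D}_C = \mathbf{I}$. The final step is to remove the scaling ambiguity: since the columns of $\mathbf{A}, \mathbf{B}, \mathbf{C}$ before rescaling are honest probability vectors summing to $1$, the only admissible rescalings are the identity, so the factor matrices — hence the conditional block distributions and, by A1 again, the per-item probabilities $p_{j\aal}$ for all $j\in I_1\cup I_2\cup I_3$ — are recovered up to the single permutation $\sigma$. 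The mixing weights $\pi_\aal$ come out as the column sums that were absorbed. It remains to recover $p_{j\aal}$ for items $j$ outside $I_1\cup I_2\cup I_3$: for such $j$, form the three-way table of $(Y_j, (Y_{j'}: j'\in I_1), (Y_{j'}: j'\in I_2))$ — now a rank-$M$ decomposition with two already-identified factor matrices of full Kruskal rank — and read off the remaining factor by a standard least-squares/linear-solve argument (the identified factors have linearly independent columns on the relevant slices, so the system determines $p_{j\aal}$ uniquely), consistently with the same labeling $\sigma$. Using A3 ($\pi_\aal>0$) throughout to ensure no latent class is degenerate and all the relevant marginal tables genuinely involve all $M$ components, this completes the identification up to permutation of the class label. $\qquad$
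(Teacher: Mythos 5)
Your overall strategy --- split the items into the three blocks of A1, view the joint law as a three-way array with a rank-$M$ decomposition, invoke Kruskal's uniqueness theorem, and remove the scaling ambiguity because the columns are probability vectors summing to one --- is exactly the route the paper takes. But you have left the decisive step unresolved, and the substitute you sketch would fail. You correctly identify the crux as verifying $k_{\mathbf A}+k_{\mathbf B}+k_{\mathbf C}\ge 2M+2$, and then propose to reach it by combinatorial lower bounds on the Kruskal rank of a matrix whose columns are pairwise distinct and two-valued per coordinate (affine independence of pairs, bounds of the form $\lceil\log_2 M\rceil+1$). Such bounds cannot work: three terms of order $\log_2 M$ fall far short of $2M+2$ once $M$ is even moderately large, and pairwise distinctness of stochastic columns by itself only guarantees Kruskal rank $2$. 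What is actually true, and what you need, is that each of the three factor matrices has \emph{full} column rank $M$, so that each Kruskal rank equals $M$ and $3M\ge 2M+2$ holds trivially for $M\ge 2$.

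The full-rank claim is where A2 does its work, and the argument is structural rather than combinatorial. For $j\in I_l$ let $p_{j-}\ne p_{j+}$ be the two values of $p_{j\aal}$ and set $\mathbf P_j=\left(\begin{smallmatrix} p_{j-} & p_{j+}\\ 1-p_{j-} & 1-p_{j+}\end{smallmatrix}\right)$, an invertible $2\times 2$ matrix. By local independence, the conditional distribution of the block $(Y_j: j\in I_l)$ given class $\aal$ is precisely the column of the Kronecker product $\mathcal P_l=\bigotimes_{j\in I_l}\mathbf P_j$ indexed by which of its two values each $p_{j\aal}$ takes. Since $\mathcal P_l$ is a Kronecker product of invertible matrices it is invertible, so all of its columns are linearly independent; and A1 guarantees that distinct latent classes select distinct columns of $\mathcal P_l$. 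Hence the $M$ columns of the factor matrix for block $l$ are $M$ distinct columns of an invertible matrix and are therefore linearly independent, i.e.\ the factor matrix has full column rank. With this lemma in place the remainder of your outline (Kruskal's theorem, scalings forced to be the identity by the column sums, and recovery of the items outside $I_1\cup I_2\cup I_3$ either by absorbing them into the third block, as the paper does, or by your linear-solve step) goes through.
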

	For the latent class models, the class labels are not identifiable based on the data. The identifiability up to a permutation of class label refers to the following fact.
	Suppose that there exist a set of item parameters and attribute prior distribution, denoted by $\tilde p^{k}_{j\aal}$ and $\tilde \pi_{\aal}$, yielding the same marginal distribution as that in \eqref{LCM} with parameters $p^{k}_{j\aal}$ and $\pi_{\aal}$.
	Then, there exists a permutation of the class labels $\lambda$ such that $$\tilde p^{k}_{j\lambda(\aal)}= p^{k}_{j\aal}\quad \textrm{and} \quad \tilde \pi_{\lambda(\aal)} =  \pi_{\aal}.$$
Theorem \ref{ThmBinaryBinary} relies on several assumptions.
Assumption $A2$ requires that each item response function take only two values. This is applicable to simple models, such as the DINA and the DINO model.
In the context of educational testing, the population is often split to two groups: capable and incapable. In psychiatric assessments, the partition corresponds to the presence or absence of mental disorders.
 Regarding assumption $A$1 for a test, it is necessary to have at least one set of items $I$ such that the conditional distributions of $(Y_j: j\in I)$ are different for all $\aal$, that is, the items in $I$ differentiate among all latent classes; otherwise, it is always possible to merge some latent classes and reduce the model to satisfy the condition.
Following the idea of repeated measurements \cite{Repeat}, Assumption $A$1 requires three such sets of items. It is often satisfied for tests with reasonably high accuracy.

The following corollary of Theorem \ref{ThmBinaryBinary} presents easy-to-check conditions  for  diagnostic classification models. The corollary does not assume a particular paramerization.


\begin{corollary}\label{ThmB}
Consider a diagnostic classification model for $J$ binary responses with $K$ binary attributes.
Suppose that we can rearrange the columns and rows of $Q$ such that it contains three distinct identity submatrices. Then, the item response function and the attribute population are identifiable up to a relabeling of the latent classes.
\end{corollary}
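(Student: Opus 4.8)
The plan is to derive the corollary from Theorem~\ref{ThmBinaryBinary}, taking the $M = 2^K$ latent classes to be the attribute profiles $\aal \in \{0,1\}^K$ and verifying that assumptions A1--A3 hold under the stated structure of $Q$; the conclusion of Theorem~\ref{ThmBinaryBinary} then translates directly into the claimed statement about item response functions and the attribute population. A permutation of the columns of $Q$ merely relabels attributes and is harmless, so I would first apply the rearrangement in the hypothesis: it produces three pairwise-disjoint sets of items $I_1, I_2, I_3$, each of size $K$, with $I_l = \{j^{(l)}_1, \ldots, j^{(l)}_K\}$ and the $q$-vector of item $j^{(l)}_k$ equal to $\mathbf{e}_k$, the $k$-th coordinate vector.

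Assumption A2 is then immediate: for any $j \in I_1 \cup I_2 \cup I_3$ we have $q_j = \mathbf{e}_k$ for some $k$, so $f_j(\cdot \mid \theta, \aal)$ depends on $\aal$ only through the single binary coordinate $\alpha^k$, whence $p_{j\aal}$ takes only two values. For A1, fix $\aal_1 \neq \aal_2$ and pick a coordinate $k$ on which they differ, $\alpha^k_1 \neq \alpha^k_2$; for each $l = 1, 2, 3$ the item $j^{(l)}_k \in I_l$ has $q$-vector $\mathbf{e}_k$, so by the convention that $q_{jk} = 1$ means $f_j$ genuinely depends on $\alpha^k$, its response probability differs between $\aal_1$ and $\aal_2$, and therefore so does the joint conditional law of $(Y_j : j \in I_l)$. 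Thus A1 holds for all three blocks. Assumption A3 is the full-support condition $\pi_{\aal} > 0$, which I would carry over as part of the model specification (or, failing that, restrict the argument to the set of profiles with positive mass); assuming $K \geq 2$ we also have $M = 2^K > 2$, as Theorem~\ref{ThmBinaryBinary} requires.

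Applying Theorem~\ref{ThmBinaryBinary} then shows that all $p_{j\aal}$ and the class probabilities $\pi_{\aal}$ are identifiable up to a common permutation of the class labels. Since the responses are binary, the item response function is completely determined by $p_{j\aal}$ through $f_j(1 \mid \theta, \aal) = p_{j\aal}$, so identifiability of the collection $(p_{j\aal})$ up to a class permutation is precisely identifiability of the item response functions up to a relabeling of the latent classes --- that is, of the attribute profiles --- and the same permutation realigns the attribute population. This is the assertion of the corollary.

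The only real content beyond invoking Theorem~\ref{ThmBinaryBinary} is the observation that a row of $Q$ with a single nonzero entry simultaneously forces the two-valued structure needed for A2 and the pairwise class-separation needed for A1. The points I would be careful about are the bookkeeping identifying the three ``identity submatrices'' with the three repeated-measurement blocks $I_1, I_2, I_3$, and the low-dimensional boundary cases (small $K$, or attribute profiles carrying zero probability); I do not expect a genuine obstacle here.
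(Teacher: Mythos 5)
Your proposal is correct and follows essentially the same route as the paper: rearrange $Q$ so the single-attribute items form three disjoint blocks $I_1,I_2,I_3$ of size $K$, note that a row equal to $\mathbf{e}_k$ forces the two-valued response structure of A2 and lets the block separate any two distinct attribute profiles as required by A1, and then invoke Theorem~\ref{ThmBinaryBinary}. Your explicit verification of A1 (and your flagging of the convention that $q_{jk}=1$ entails genuine dependence of $p_{j\aal}$ on $\alpha^k$) is slightly more careful than the paper's ``straightforward to check,'' but the argument is the same.
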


Model identifiability is closely related to the completeness of $Q$-matrix.
A $Q$-matrix is said to be complete if it can differentiate all latent classes.
It is well known that for a $Q$-matrix to be complete, it must have a $K$ by $K$ identity submatrix. \cite{Chiu}
However, having an identity is not sufficient enough.
In our corollary \ref{ThmB} above, we require three identity matrices structure in $Q$-matrix to ensure a stronger sufficient conditions for identifiability of parameters.

The following theorem generalizes the results to the cases of multi-category responses.

\begin{theorem}\label{ThmBinary}
We consider the general setting of a latent class model with $M>2$ classes. The response to item $j$ takes $k_j$ possible values $\{1,...,k_j\}$. Let $p_{j\aal}^k = P(Y_j= k | \aal) $ and $\pp_{j\aal} = (p_{j\aal}^1,..., p_{j\aal}^{k_j})$ be the response vector.  Let $\pi_{\aal}$ be the probability of each latent class. Suppose that the following conditions are satisfied for each item $j$.
\begin{enumerate}
\item[B1] The response vector  $\pp_{j\aal}$ takes only two distinct values for all $\aal = 1,..., M$.
\item[B2] There exist three non-overlap subsets of items denoted by $I_1$, $I_2$, and $I_3$ such that for $\aal_1 \neq \aal_2$ and $l=1,2$, and $3$,  there exists $j\in I_l$ such that
$$p_{j\aal_1}^1 + ... + p_{j\aal_1}^k \neq p_{j\aal_2}^1 + ... + p_{j\aal_2}^k$$
for some $k=1,...,k_j-1$.
\item[B3] $\pi_{\aal} > 0$ for $\aal = 1,2,\ldots,M$.
\end{enumerate}
Then, the item parameters $p^{k}_{j\aal}$ and the latent class population $\pi_{\aal}$ are identifiable up to a permutation of the class labels.
\end{theorem}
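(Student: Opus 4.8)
The plan is to reduce the statement to Kruskal's theorem on the uniqueness of three-way tensor decompositions, using B2 to split the items into three informative blocks and B1 together with B2 to control the Kruskal ranks of the resulting factor matrices. The argument runs in parallel to the binary case of Theorem \ref{ThmBinaryBinary}, the only change being that each item's response vector now lives in $\mathbb{R}^{k_j}$ rather than $\mathbb{R}^2$.

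First I would form the three-way array whose $(\yy_1,\yy_2,\yy_3)$ entry is $P(Y_{I_1}=\yy_1,\,Y_{I_2}=\yy_2,\,Y_{I_3}=\yy_3)$. Because $I_1,I_2,I_3$ are disjoint and the $Y_j$ are conditionally independent given $\aal$, this array equals $\sum_{\aal=1}^M \pi_\aal\,\mathbf{a}_1^{(\aal)}\otimes\mathbf{a}_2^{(\aal)}\otimes\mathbf{a}_3^{(\aal)}$, where $\mathbf{a}_\ell^{(\aal)}$ is the flattened conditional distribution of $(Y_j:j\in I_\ell)$ given class $\aal$; I collect these vectors as the columns of a factor matrix $A_\ell$. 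Since the marginal law \eqref{LCM} determines this array, it suffices to show the decomposition is unique up to relabeling $\aal$, and by Kruskal's theorem this follows once the Kruskal ranks of $A_1,A_2,A_3$ sum to at least $2M+2$.

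The crux is to show that each $A_\ell$ has Kruskal rank exactly $M$. Fix $\ell$ and write $I_\ell=\{j_1,\dots,j_r\}$. By B1, each $\pp_{j\aal}$ takes, as $\aal$ varies, only two values $\mathbf{b}_j^0\neq\mathbf{b}_j^1$; two distinct probability vectors are linearly independent, so $\{\mathbf{b}_j^0,\mathbf{b}_j^1\}$ spans a two-dimensional subspace $V_j$. The $\aal$-th column of $A_\ell$ equals $\bigotimes_{j\in I_\ell}\pp_{j\aal}$, hence is one of the $2^r$ tensor-product basis vectors of $V_{j_1}\otimes\cdots\otimes V_{j_r}$, determined by the binary pattern that records, for each $j\in I_\ell$, whether $\pp_{j\aal}=\mathbf{b}_j^0$ or $\mathbf{b}_j^1$. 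Assumption B2 says precisely that distinct classes have distinct patterns (a difference in some cumulative probability forces $\pp_{j\aal_1}\neq\pp_{j\aal_2}$ for some $j\in I_\ell$). Hence the $M$ columns of $A_\ell$ are $M$ distinct basis vectors, so they are linearly independent and $A_\ell$ has Kruskal rank $M$. Therefore the three Kruskal ranks sum to $3M\geq 2M+2$ (using $M>2$), and Kruskal's theorem gives that $(\pi_\aal)$ and $A_1,A_2,A_3$ are determined up to a common column permutation and column rescalings; the rescalings are trivial since every column of each $A_\ell$ and each $\pi_\aal$ is a probability (normalized, and positive by B3). Marginalizing the now-identified columns of $A_\ell$ recovers $\pp_{j\aal}$ for all $j\in I_1\cup I_2\cup I_3$, up to the same label permutation.

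Finally, for an item $j\notin I_1\cup I_2\cup I_3$, once the labeling is fixed I would use $P(Y_j=k,\,Y_{I_1}=\yy_1)=\sum_{\aal=1}^M \pi_\aal\,p_{j\aal}^k\,A_1[\yy_1,\aal]$ for each level $k$: since $A_1$ has full column rank $M$ and the $\pi_\aal$ are positive, this linear system has a unique solution in $(p_{j\aal}^k)_\aal$, which identifies all remaining item parameters. I expect the main obstacle to be the third step—checking that B1 and B2 jointly force the columns of each $A_\ell$ to be distinct members of a single tensor basis—because this is exactly what makes the Kruskal-rank bound valid for \emph{every} parameter configuration on the relevant low-dimensional manifold, rather than only generically; relative to the binary Theorem \ref{ThmBinaryBinary}, the multi-category extension is then essentially cosmetic.
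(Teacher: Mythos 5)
Your proposal is correct and follows essentially the same route as the paper: both arguments realize each block's factor matrix as a selection of columns from the Kronecker product $\bigotimes_{j\in I_\ell}\mathbf{P}_j$ of the two-column item matrices, use B2 to show distinct classes pick distinct columns (hence full column rank $M$), and then invoke Kruskal's lemma with $3M\geq 2M+2$. The only cosmetic difference is how items outside $I_1\cup I_2\cup I_3$ are absorbed — the paper enlarges the third block to cover them, while you recover their parameters afterward from a cross-moment linear system using the full column rank of $A_1$; both are valid.
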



We now proceed to the results of the general diagnostic classification models in which the responses are multi-categorical variables and the response function may take more than two possible values.
We need to introduce the $T$-matrix. Consider a test of $K$ attributes and $J$ items. The response to item $j$ takes $k_j$ different values $\{1,...,k_j\}$ and attribute $i$ takes $d_i$ possible different values $\{1,...,d_i\}$. There are $\kappa = \prod_{j=1}^J k_j$ possible response patterns and $M = \prod_{k=1}^K d_k$  latent classes.
 We defined the $T$-matrix as a $\kappa \times M$ matrix. Each column of the matrix corresponds to one attribute profile or one latent class and each row corresponds to one response pattern. The particular order of the columns and rows of the $T$-matrix does not affect the results. For technical convenience, we use $\aal$ and $\yy = (y^1,...,y^J)$ to label its columns and rows, that is, $t_{\yy\aal}$ is the element in  row $\yy$ and  column $\aal$.
\begin{equation*}
t_{\yy\aal} = P(\YY = \yy |\aal) = \prod_{j=1}^J p_{j\aal}^{y^j}.
\end{equation*}
Very often, we construct a $T$-matrix for a subset of items $I\subset \{1,...,J\}$. We use $T_I$ to denote the corresponding matrix of the items in $I$ and $T$ to denote the matrix to all items.
We first present a theorem that is essentially applicable to all situations, but its conditions are sometimes difficult to check and then we provide a few propositions for specific situations.

\begin{theorem}\label{ThmGeneral}
Consider a diagnostic classification model for $J$ items with $K$ attributes. Suppose that we are able split the items into three nonoverlap subsets $I_1$, $I_2$, and $I_3$, that is, $I_1\cup I_2 \cup I_3 = \{1,...,J\}$. If $T_{I_1}$, $T_{I_2}$, and $T_{I_3}$, the $T$-matrices corresponding to each of the three subsets, are all of full column rank. The probability of each latent attribute profile class $\pi_{\aal}$ is positive. Then the item parameters $p^{k}_{j\aal}$ and the latent class population $\pi_{\aal}$ are identifiable up to a permutation of the class labels.
\end{theorem}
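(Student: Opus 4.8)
The argument is an application of Kruskal's uniqueness theorem for three-way arrays, after recasting the marginal law of $\YY$ as a rank-$M$ tensor whose three factor matrices are exactly $T_{I_1}$, $T_{I_2}$, and $T_{I_3}$. The starting observation is that the marginal distribution in \eqref{LCM} determines, and is determined by, the three-way array $P$ indexed by the response patterns $\yy_{I_1}$, $\yy_{I_2}$, $\yy_{I_3}$ on the three blocks of items, with entries
\[
P_{\yy_{I_1},\,\yy_{I_2},\,\yy_{I_3}} \;=\; \sum_{\aal} \pi_{\aal}\,(T_{I_1})_{\yy_{I_1},\,\aal}\,(T_{I_2})_{\yy_{I_2},\,\aal}\,(T_{I_3})_{\yy_{I_3},\,\aal},
\]
which holds by local independence across the blocks $I_1,I_2,I_3$. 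In Kruskal/CP notation this is a decomposition of $P$ into $M$ rank-one terms, the $\aal$-th term being $\pi_\aal$ times the outer product of the $\aal$-th columns of $T_{I_1}$, $T_{I_2}$, $T_{I_3}$.

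Next I would compute the Kruskal ranks of the factor matrices. Since each $T_{I_l}$ has full column rank $M$, every subset of its columns is linearly independent, so its Kruskal rank equals $M$; hence the three Kruskal ranks sum to $3M$, which is at least $2M+2$ because $M\ge 2$. Kruskal's theorem then guarantees that the CP decomposition above is unique up to a simultaneous permutation of the $M$ components and a rescaling of the three factor columns within each component, with the product of the three column scalars and the weight $\pi_\aal$ left invariant. The positivity assumption $\pi_\aal>0$ enters here: it ensures that all $M$ components are genuinely present, so that the tensor has rank exactly $M$ and no class is lost in the decomposition. (Any alternative parameter set producing the same marginal yields another decomposition of $P$ with at most $M$ terms, which Kruskal's theorem forces to coincide with the true one.)

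It then remains to pin down the scalings and to descend from the blockwise $T$-matrices to the individual item parameters. Each column of $T_{I_l}$ is a conditional probability mass function over the response patterns of the items in $I_l$, hence sums to one; imposing this normalization removes the rescaling freedom, so $T_{I_1}$, $T_{I_2}$, $T_{I_3}$ are determined up to a single common permutation $\lambda$ of their columns, and the weights $\pi_\aal$ are then determined up to the same $\lambda$. Finally, for any item $j\in I_l$, local independence within the block $I_l$ makes the $\aal$-th column of $T_{I_l}$ a product measure, so $p^{k}_{j\aal}=P(Y_j=k\mid\aal)$ is recovered as a one-dimensional marginal of that column; since $I_1\cup I_2\cup I_3=\{1,\dots,J\}$, every item parameter $p^{k}_{j\aal}$ is obtained, all relative to the \emph{same} relabeling $\lambda$. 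This is precisely identifiability up to a permutation of the class labels.

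I expect the main technical obstacle to be the bookkeeping around Kruskal's theorem: verifying that full column rank gives Kruskal rank $M$ (immediate from the definition), checking the inequality on the sum of Kruskal ranks, and — most importantly — arguing carefully that the \emph{single} common permutation produced by Kruskal's theorem is what allows the three blockwise reconstructions to be glued into a consistent relabeling of the latent classes. Without this feature, the matrices $T_{I_1},T_{I_2},T_{I_3}$ could a priori be recovered under unrelated permutations, and the item parameters on different blocks would fail to match up on a common attribute profile.
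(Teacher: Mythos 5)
Your proposal is correct and follows essentially the same route as the paper: both form the three-way array over the response patterns of the three blocks, invoke Kruskal's theorem using that full column rank gives Kruskal rank $M$ (so the ranks sum to $3M\ge 2M+2$), and then use the fact that each column of a $T$-matrix sums to one to eliminate the scaling ambiguity, leaving only a common permutation of class labels. The only cosmetic difference is that you keep $\pi_{\aal}$ as the CP weight while the paper absorbs it into $T_{I_1}$ via the diagonal matrix $\Lambda$; your closing remarks on recovering individual item parameters as marginals of the product-measure columns are a harmless elaboration of what the paper leaves implicit.
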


The above theorem provides sufficient conditions for the identifiability.
These conditions are very weak but are sometimes difficult to check in practice. This is mostly due to the fact that the $T$-matrix is computationally costly to construct. For instance, consider a subset of 20 items having binary responses and its $T$-matrix has $2^{20} = 1,048,576$ rows. Thus, construction of $T$-matrix for reasonably large-scale studies is impossible. For this concern, we present the following theorem that provides stronger but much easier to check conditions. It requires the constructions of much smaller $T$-matrices.

\begin{theorem}\label{ThmGeneralDCM}
Consider a diagnostic classification model for $J$ items with $K$ multi-category attributes. For each attribute $k= 1,...,K$, there exist three nonoverlap subsets of items $I_{1,k}$, $I_{2,k}$, and $I_{3,k}$ satisfying the following conditions.
\begin{enumerate}
\item [C1] The items in $I_{i,k}$ are only associated with attribute $k$, that is, their corresponding row vector in $Q$ is $e_k$.
\item [C2] Let $T_{I_{i,k}}$ be the corresponding $T$-matrix of this reduced simple-attribute model. The matrix $T_{I_{i,k}}$ is of full column rank.
\item [C3] Let $\pi_{\aal}$ be the probability of latent class $\aal$. $\pi_{\aal} > 0, \aal = 1,2, \ldots, M$.
\end{enumerate}
Then, the item parameters $p^{k}_{j\aal}$ and the latent class population $\pi_{\aal}$ are identifiable up to a permutation of the class labels.
\end{theorem}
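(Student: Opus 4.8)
The plan is to reduce Theorem~\ref{ThmGeneralDCM} to Theorem~\ref{ThmGeneral}. Concretely, I would use the local single‑attribute blocks $I_{i,k}$ to build a genuine three‑way partition $I_1,I_2,I_3$ of $\{1,\ldots,J\}$ whose associated $T$‑matrices $T_{I_1},T_{I_2},T_{I_3}$ all have full column rank $M=\prod_{k=1}^{K}d_k$. Since condition C3 is exactly the positivity hypothesis of Theorem~\ref{ThmGeneral}, that theorem then yields identifiability of $p^k_{j\aal}$ and $\pi_\aal$ up to a permutation of the class labels, which is the desired conclusion.

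First I would observe that the blocks $\{I_{i,k}:i=1,2,3,\ k=1,\ldots,K\}$ are pairwise disjoint. For fixed $k$, disjointness of $I_{1,k},I_{2,k},I_{3,k}$ is assumed; for $k\neq k'$, an item lying in both $I_{i,k}$ and $I_{i',k'}$ would, by C1, have $Q$-row equal to both $e_k$ and $e_{k'}$, which is impossible. Set $I_i^{0}=\bigcup_{k=1}^{K}I_{i,k}$ for $i=1,2,3$; these three sets are disjoint. Let $R=\{1,\ldots,J\}\setminus(I_1^{0}\cup I_2^{0}\cup I_3^{0})$ collect the remaining items, and put $I_1=I_1^{0}\cup R$, $I_2=I_2^{0}$, $I_3=I_3^{0}$. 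Then $I_1,I_2,I_3$ is a partition of $\{1,\ldots,J\}$ as required by Theorem~\ref{ThmGeneral}.

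The heart of the argument is that each $T_{I_i^{0}}$ already has full column rank $M$. Because items in $I_{i,k}$ depend on $\aal$ only through $\alpha^k$ by C1, and responses to distinct items are conditionally independent given $\aal$ (local independence), the entry of $T_{I_i^{0}}$ indexed by a response pattern $\yy=(\yy_{I_{i,1}},\ldots,\yy_{I_{i,K}})$ and a profile $\aal=(\alpha^1,\ldots,\alpha^K)$ factors as $\prod_{k=1}^{K}T_{I_{i,k}}[\yy_{I_{i,k}},\alpha^k]$; with rows and columns ordered lexicographically this means $T_{I_i^{0}}=T_{I_{i,1}}\otimes\cdots\otimes T_{I_{i,K}}$ is a Kronecker product. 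Since $\mathrm{rank}(A\otimes B)=\mathrm{rank}(A)\,\mathrm{rank}(B)$ and each $T_{I_{i,k}}$ has full column rank $d_k$ by C2, we obtain $\mathrm{rank}(T_{I_i^{0}})=\prod_{k=1}^{K}d_k=M$. Finally, adjoining more items never decreases column rank: for $j\notin S$, summing the rows of $T_{S\cup\{j\}}$ over the $Y^j$‑coordinate recovers $T_S$ (because $\sum_{y}p_{j\aal}^{y}=1$), so $T_S$ is a fixed row‑linear image of $T_{S\cup\{j\}}$ and $\mathrm{rank}(T_S)\le\mathrm{rank}(T_{S\cup\{j\}})$. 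Iterating gives $\mathrm{rank}(T_{I_1})\ge\mathrm{rank}(T_{I_1^{0}})=M$, and likewise $\mathrm{rank}(T_{I_2})=\mathrm{rank}(T_{I_3})=M$; since each has only $M$ columns, all three have full column rank. Theorem~\ref{ThmGeneral} then finishes the proof.

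The only step beyond routine bookkeeping is the Kronecker factorization: one must fix consistent orderings of the response patterns and attribute profiles across the blocks and carefully combine local independence with C1 so that the joint conditional probability over $I_i^{0}$ separates into the product over single‑attribute blocks. The disjointness observation and the monotonicity‑of‑rank remark are straightforward once stated, and no new use of C3 is needed beyond passing it to Theorem~\ref{ThmGeneral}.
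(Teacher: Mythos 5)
Your proposal is correct and follows essentially the same route as the paper's own proof: form the unions $\bigcup_k I_{i,k}$, identify their $T$-matrices as Kronecker products of the single-attribute blocks so that C2 gives full column rank $M$, absorb the leftover items into one of the three sets (the paper puts them in $I_3$, you put them in $I_1$ — immaterial), and invoke Theorem~\ref{ThmGeneral}. Your explicit justification that adjoining items cannot decrease column rank (marginalizing over $Y^j$ realizes $T_S$ as a row-linear image of $T_{S\cup\{j\}}$) is a small point the paper asserts without proof, but the argument is otherwise the same.
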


	The above theorem seemingly requires many single-attribute items. In practice, each of the subset $I_{i,k}$ usually contains very few, in fact most of the time, single items.
	However, notice that the matrix $T_{I_{i,k}}$ for the reduced single-attribute model contains $d_k$ columns. In the case of binary attribute, it is sufficient to include a single item in each $I_{i,k}$; see the proof of Theorem \ref{ThmBinary}.
	It may remain possibly sufficient to include a single item in each $I_{i,k}$ if the response to the item also takes more than two possible values.
	Generally speaking, we need to include sufficiently many items in each $I_{i,k}$ so that their responses contain information to differentiate different latent classes defined by attribute $k$.
	Furthermore, the construction of the $T$-matrices for the reduced model is much easier as $I_{i,k}$ often contains very few items and the matrix only contains $d_k$ columns.

\subsection{Identifiability of partial information structure}

	The previous subsection provides results on the identifiability of the item parameters and the attribute population. We now proceed to a discussion of the $Q$-matrix.
	The $Q$-matrix provides a qualitative description between the item-attribute relationship. The specific form an item response function takes depends on the model parameterization and the loading structure.
	As the aim of this study is to provide results applicable to general diagnostic classification models, we take a slightly different viewpoint and state the identifiability of the partial information structure for each item that is mathematically a more general concept than the $Q$-matrix.
	
	To proceed, we start with a description of the partial information of an item in the context of a general latent class model. Let $\aal \in \mathcal M = \{1,...,M\}$ denote the latent class membership.
	The partial information of item $j$ characterizes the latent classes it is capable of differentiating.
	Mathematically, we define an item-specific equivalence relation on $\mathcal M$, denoted by ``$\overset j = $''. For $\aal_1, \aal_2 \in \mathcal M$, $$\aal_1 \overset j = \aal_2 \quad \textrm{if} \quad p_{j\aal_1}^y = p_{j\aal_2}^y\quad \textrm{for all $y = 1,...,k_j$.}$$
	It is not hard to verify that ``$\overset j = $'' is an equivalence relation. We define the quotient set $\mathcal M / \overset j =$ as the partial information of item $j$ and use $[\aal]_j$ to denote the corresponding equivalence class that latent class $\aal$ belongs to.
    The map $[~\cdot~]_j$ is known as the canonical projection which leads to a partition of latent classes.
	Two latent classes are mapped to the same equivalence class, $[\aal_1]_j = [\aal_2]_j$ if $\aal_1 \overset j = \aal_2$ and, in this case, item $j$ does not provide information to differentiate $\aal_1$ and $\aal_2$.

From the modeling point of view, the $Q$-matrix along with a particular loading parameterization determines the partial information of each item.
Consider a particular item $j$ whose corresponding row vector in $Q$ has the first $l$ entries being one and others being zero.
Then, the conditional response distribution reduces to
$$P(Y^j = y^j | \aal) = P(Y^j = y^j | \alpha^1,...,\alpha^l).$$
Consider two attribute profiles $\aal_1$ and $\aal_2$. If their first $l$ components are identical, then $[\aal_1]_j = [\aal_2]_j$. Under some simple loading structures (for instance, the DINO model), even if some of their first $l$ elements are not identical, $\aal_1$ and  $\aal_2$ may still belong to the same equivalence class.
The following theorem presents identifiability of the partial information structure of each item.

\begin{theorem}\label{ThmPI}
Under each set of conditions of Theorems \ref{ThmBinaryBinary}, \ref{ThmBinary}, \ref{ThmGeneral}, or \ref{ThmGeneralDCM}, the partial information of each item can be consistently estimated up to a permutation of the latent class label. That is, letting ``$\langle ~~\rangle_j$'' be the estimated canonical projection of item $j$, there exists a permutation of latent class labels $\lambda$ such that
$$P( \langle \lambda(\aal) \rangle_j = [\aal]_j) \to 1$$
as the sample size $n\to \infty$, $\aal \in \mathcal M$.
\end{theorem}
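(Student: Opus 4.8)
The plan is to leverage the parameter identifiability already granted by Theorems \ref{ThmBinaryBinary}--\ref{ThmGeneralDCM} and show that the partial information structure is a deterministic (measurable) function of the identifiable item parameters, hence consistently estimable by plugging in a consistent estimator. Concretely, under any one of the four sets of hypotheses, we know that a consistent estimator $(\hat p^{k}_{j\aal}, \hat\pi_{\aal})$ of the item parameters and attribute population exists, identifiable up to a common permutation $\lambda$ of the latent class labels; fix such a $\lambda$ (matched, say, to the true parameters at the relevant sample size in the sense of Definition~\ref{DefId} and the discussion following Theorem~\ref{ThmBinaryBinary}). I would then define the estimated equivalence relation ``$\langle~\rangle_j$'' in exact analogy with ``$\overset j=$'', but using the estimated response vectors: declare $\aal_1 \langle=\rangle_j \aal_2$ when $\hat{\pp}_{j\aal_1} = \hat{\pp}_{j\aal_2}$. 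The obvious difficulty is that equality of estimated probabilities is a measure-zero event, so this naive definition will not work; instead one must threshold. Accordingly I would introduce a sequence of cut-offs $\epsilon_n \to 0$ with $\epsilon_n$ decaying slower than the estimation error (e.g. $\epsilon_n = n^{-1/4}$ against the typical $n^{-1/2}$ rate), and set $\aal_1 \langle=\rangle_j \aal_2$ iff $\max_k |\hat p^{k}_{j\aal_1} - \hat p^{k}_{j\aal_2}| \le \epsilon_n$.

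The core argument is then a two-sided separation estimate. Let $\delta_j = \min\{ \max_k |p^{k}_{j\aal_1} - p^{k}_{j\aal_2}| : [\aal_1]_j \neq [\aal_2]_j \}$ be the smallest gap between distinct equivalence classes of item $j$; since $\mathcal M$ is finite and the true parameters are fixed, $\delta_j > 0$. I would argue: (i) if $[\aal_1]_j = [\aal_2]_j$, then $p^{k}_{j\aal_1} = p^{k}_{j\aal_2}$ exactly, so $\max_k |\hat p^{k}_{j\lambda(\aal_1)} - \hat p^{k}_{j\lambda(\aal_2)}| \le 2\max_{k,\aal}|\hat p^{k}_{j\lambda(\aal)} - p^{k}_{j\aal}|$, which is below $\epsilon_n$ with probability tending to one once the estimation error is $o(\epsilon_n)$; and (ii) if $[\aal_1]_j \neq [\aal_2]_j$, then by the triangle inequality $\max_k|\hat p^{k}_{j\lambda(\aal_1)} - \hat p^{k}_{j\lambda(\aal_2)}| \ge \delta_j - 2\max_{k,\aal}|\hat p^{k}_{j\lambda(\aal)} - p^{k}_{j\aal}|$, which exceeds $\epsilon_n$ with probability tending to one once $\epsilon_n < \delta_j/2$ eventually and the error is $o(\delta_j)$. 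Intersecting these events over the finitely many pairs $(\aal_1,\aal_2)$ and finitely many items $j$ gives that, with probability tending to one, $\langle\lambda(\aal_1)\rangle_j = \langle\lambda(\aal_2)\rangle_j$ exactly when $[\aal_1]_j = [\aal_2]_j$; that is, the estimated partition of $\mathcal M$ equals the true partition (transported by $\lambda$), which is precisely the claimed statement $P(\langle\lambda(\aal)\rangle_j = [\aal]_j) \to 1$ for every $\aal \in \mathcal M$.

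I expect the main obstacle to be purely a matter of bookkeeping rather than deep mathematics: one must be careful that the label permutation $\lambda$ is chosen consistently (the same $\lambda$ aligning estimator and truth across all items simultaneously, which is legitimate because the identifiability theorems produce a single permutation of the shared class labels), and that the consistency of $(\hat p^{k}_{j\aal})$ is invoked at the level of convergence in probability of the whole finite-dimensional parameter vector, so that $\max_{j,k,\aal}|\hat p^{k}_{j\lambda(\aal)} - p^{k}_{j\aal}| \to 0$ in probability. A secondary subtlety is the estimator of the partition in the Dirichlet-allocation / Bayesian setting of Section~\ref{SecModel}, where the ``estimated canonical projection'' comes from a clustering step applied to posterior summaries; there one additionally needs that the posterior concentrates at rate $o(\epsilon_n)$ around the truth, which follows from a Bernstein--von Mises--type or posterior-consistency argument under the same identifiability conditions. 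Since the theorem as stated only asserts consistency (not a rate), the slowly-shrinking threshold $\epsilon_n$ absorbs all of this, and no quantitative control beyond $\sup |\hat p - p| = o_P(1)$ together with $\epsilon_n \to 0$, $\epsilon_n^{-1}\sup|\hat p - p| \to_P 0$ is needed.
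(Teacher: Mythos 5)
Your proposal is correct and follows essentially the same route as the paper's proof: plug in a consistent (there, the maximum likelihood) estimator of the identifiable response probabilities, then declare two classes item-$j$ equivalent when the distance between their estimated response vectors falls below a threshold chosen between the $O_p(n^{-1/2})$ estimation error and the fixed positive gap separating distinct true equivalence classes. The only differences are cosmetic (the paper uses the squared Euclidean distance with cutoff $n^{-1/2}$ rather than a max-norm with a generic $\epsilon_n$).
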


\section{Estimation via a latent class model with Dirichlet allocation} \label{SecModel}

	In this section, we provide estimation for the partial information structure in the context of a latent class model using Dirichlet allocation \cite{Dunson}.
	We adopt the general setup of the latent class models in Section \ref{SecLat}. In the exploratory analysis, the number of latent classes is usually unknown. In this analysis, we do not assume an upper bound on the number of latent classes. Without loss of generality, we assume that the latent classes are labeled by natural numbers $\mathcal M = \{1,2,...,\}$. The marginal distribution of  the responses in \eqref{LCM} becomes
\begin{equation}\label{like}
P(Y^1 = y^1, ..., Y^J, = y^J) = \sum_{\aal =1}^\infty \pi_{\aal} \prod_{j=1}^J P(Y^j = y^j |\aal).
\end{equation}
	The unknown parameters are $\pp$ and $\pi$. This infinite mixture model is overly parameterized as it is unnecessary to include infinitely many latent classes for which the estimation is impossible.
	We adopt a Bayesian model and a proper prior distribution to regularize this over parameterization. The item response probabilities follow a Dirichlet prior distribution
\begin{equation}\label{priorP}
\pp_{j\aal}= (p_{j\aal}^y: y = 1,..., k_j) \sim Dirichlet (1,...,1).
\end{equation}
The attribute distribution $\pi$ is an infinite-dimensional vector summing up to one. We adopt a stick-breaking \cite{Sethur} representation for its prior.
In particular, let $\{V_i: i= 1,2,...\}$ be a sequence of i.i.d.~random variables following the Beta distribution Beta$(1,\beta)$. For each natural number $\aal$, the prior distribution of $\pi_{\aal}$ has the following representation
\begin{equation}\label{priorPi}
\pi_{\aal} = V_{\aal} \prod _{l< \aal} (1-V_l).
\end{equation}
It is easy to verify that $\pi_{\aal}$ under the above construction is a well defined probability mass function.
This is known as the stick-breaking representation originated from the Dirichlet process.
We borrow this representation mostly due to its technical convenience for modeling a discrete distribution.
The likelihood function \eqref{like} and the prior distributions \eqref{priorP} and \eqref{priorPi} completely specify a Bayesian model.

	We adopt this model for several reasons. First, it does not require to specify the number of latent classes. The stick-breaking representation penalizes the ``tail'' latent classes.
	In addition, the posterior distribution of this model can be obtained through a sliced sampler \cite{Walker} that is a Gibbs sampler via a data augmentation scheme without truncating the model to a finite mixture.
	Given that this is not the emphasis of this paper, we present the posterior simulation scheme in the appendix.
	
	Henceforth, we assume that the posterior distribution of the model parameters has been obtained numerically.
	We consider posterior mean as the point estimator, that is,
	$$(\hat {\pp}, \hat {\pi}) = E\{(\pp,\pi) | \yy_1,...,\yy_n\}.$$
	It is well known  that the posterior mean has the same asymptotic distribution as that of the maximum likelihood estimator; see, for instance, \cite{Postmean}.
	The item-wise partial information is estimated by clustering the item response probabilities. For each item $j$, let $\hat \pp_{j\aal} = (\hat p^1_{j\aal},..., \hat p^{k_j}_{j\aal})$ be its item response distribution of latent class $\aal$.
	We treat $\hat \pp_{j1}$, $\hat \pp_{j2}$, $\hat \pp_{j3}$, ... as (infinitely many) samples, each of which is a $k_j$ dimensional vector, and apply the $K$-means clustering algorithm.
	Although there are seemingly infinitely many samples, $\hat \pp_{j\aal}$'s are practically identical for $\aal$ large enough. This is because the data do not provide information for all $\pp_{j\aal}$. For the latent classes to which the data provide little information, their posterior means are essentially the same as the prior mean.
	In this procedure, we select the largest $M$ such that
	$$ \hat \pi_{\aal} \ll n^{-1/2} \quad \aal = M+1, M+2, \ldots.$$
    That is, we treat latent classes of very small proportion $(o(\frac{1}{\sqrt{n}}))$  as practically non existing.
	In applying the $K$-means algorithm, we truncate the finite sample to $\hat \pp_{j1}$, $\hat \pp_{j2}$, ..., $\hat \pp_{jM}$.
	Then, the estimated partial information is given by $\aal_1 \overset j = \aal_2$ if $\hat \pp_{j\aal_1}$ and $\hat \pp_{j\aal_2}$ are in the same cluster according to the $K$-means algorithm.
In the following sections, we apply this method to both simulated and real data to assess its performance.

\section{Simulation study}\label{SecSim}

In this section, simulation studies are conducted to assess the performance of
the proposed method and identifiability results. We consider three models for the data generation: NIDA model, reduced NC-RUM model, and LCDM model. The results
are presented assuming all the model parameters are unknown including the
Q-matrix, attribute distribution, and the item response probabilities.

\subsection{NIDA model simulation}

Here, we consider a three-attribute NIDA model. The response probabilities are given by
$$p_{j,\aal} = \prod_{k=1}^K [(1-s_k)^{\alpha_k} g_k ^{1-\alpha_k}]^{q_{jk}}.$$
The $Q$-matrix and parameters $s_k, g_k$ are listed in Table \ref{NIDA}.
There are three attributes and eight classes in total. We consider sample size 2000. The latent class of attribute profile $(1,0,0)$ has mixture probability $\pi_1$,  $(0,1,0)$ has probability $\pi_2$,  $(0,0,1)$ has probability $\pi_3$,  $(1,1,0)$ has mixture probability $\pi_4$,  $(1,0,1)$ has mixture probability $\pi_5$,  $(0,1,1)$ has mixture probability $\pi_6$, $(1,1,1)$ has probability $\pi_7$ and $(0,0,0)$ has mixture probability $\pi_8$.  And we set $\pi_1=\pi_2=\pi_3=\pi_7=0.15, \pi_4=\pi_5=\pi_6=\pi_8=0.1$. This model setup satisfies the identifiability condition.

\begin{table}[tbp]
\centering
\begin{tabular}{|c|ccc|cccccc|}
\hline
\multirow{2}{*}{Item}&\multicolumn{3}{|c|}{\multirow{2}{*}{Q-matrix}} & \multicolumn{6}{|c|}{G and S parameters}\\
&\multicolumn{3}{|c|}{} & s1 & g1 & s2 & g2 & s3 & g3 \\
\hline
 1&1 & 0 & 0 &0.1      & 0.1  & -  &  -  & -  & -  \\
 2&1 & 0 & 0 &0.1      & 0.1  & -  &  -  & -  & -  \\
 3&1 & 0 & 0 &0.1      & 0.1  & -  &  -  & -  & -  \\
 4&0 & 1 & 0 & -      &  -  &  0.1  &  0.2  & -  & -  \\
 5&0 & 1 & 0 & -      & -  &  0.1  &  0.2  & -  & -  \\
 6&0 & 1 & 0 & -      & -  &  0.1  &  0.2  & -  & -  \\
 7&0 & 0 & 1 & -      & -  & -  &  -  & 0.1  &  0.3  \\
 8&0 & 0 & 1 & -      &  -  & -  &  -  & 0.1  & 0.3  \\
 9&0 & 0 & 1 & -      &  -  & -  & -  & 0.1  & 0.3  \\
 10&1 & 1 & 0 &0.1      & 0.5  &  0.1  &  0.5  & -  & -  \\
 11&1 & 0 & 1 &0.1      & 0.5  & -  &  -  &  0.1  & 0.5  \\
 12&0 & 1 & 1 & -      & -  &  0.1  &  0.5  & 0.1  & 0.5  \\
 13&1 & 1 & 1 &0.1      & 0.5  & 0.1  &  0.5  & 0.1  & 0.5  \\
 \hline
\end{tabular}
\caption{Q-matrix and parameters setting for NIDA model simulation}\label{NIDA}
\end{table}

	We fit the model in Section \ref{SecModel} and the parameter estimates (averaging over 100 independent replications) are given in Tables \ref{NIDApb} and \ref{NIDAe}. We can see our estimated response probabilities  are very close to the true.
	The mean squared errors of the estimators are computed based on $100$ independent replicates.
	The mixture probabilities of other classes except these eight are less than 5e-3. Our proposed method can recover the number of classes very well.
    The frequentist properties of estimation of response probabilities are listed in Table \ref{NIDApb}.

	We apply the K-means method to the estimated response probabilities of each item. We use the average Silhouette method to choose the number of clusters.
	We obtain the estimated partial information structure and reconstruct the $Q$-matrix accordingly.
	The proportion of the correctly estimated item's partial information structure is 88.4\%.

\begin{table}[tbp]
\centering
\begin{tabular}{|c|cccccccc|}
\hline
     & C1 & C2 & C3 & C4 & C5 & C6 & C7 & C8 \\
\hline
 Averaged estimates& 0.141  &  0.144  & 0.145  &  0.091  & 0.095  & 0.100 & 0.150 & 0.097  \\
 MSE & 3e-4 & 3e-4  &  4e-4 & 2e-4 & 3e-4 & 2e-4 & 2e-4 & 4e-4  \\
 \hline
\end{tabular}
\caption{The mean and mse of the estimated class probabilities based on 100 simulation runs.}\label{NIDApi}
\end{table}

\begin{table}[tbp]
\centering
\begin{tabular}{|c|cccccccc|cccccccc|}
\hline
\multirow{2}{*}{I}& \multicolumn{8}{|c|}{Estimated prob} & \multicolumn{8}{|c|}{MSE of prob}\\
 & C1 & C2 & C3 & C4 & C5 & C6 & C7 & C8 & C1 & C2 & C3 & C4 & C5 & C6 & C7 & C8\\
\hline
 1& 0.91 & 0.10 & 0.10 & 0.90 & 0.90 & 0.10 &0.90 & 0.10 &  7e-4   & 8e-4  & 6e-4  &  1e-3  & 1e-3  & 8e-4 & 4e-4 & 1e-3\\
 2& 0.91 & 0.10 & 0.09 & 0.90 & 0.90 & 0.10 &0.90 & 0.10&  6e-4   & 6e-4  & 4e-4  &  1e-3  & 9e-4  & 8e-4 & 3e-4& 9e-4\\
 3& 0.90 & 0.10 & 0.10 & 0.90 & 0.90 & 0.11 &0.90 & 0.10&  6e-4   & 5e-4  & 5e-4  &  1e-3  & 1e-3  & 7e-4 & 4e-4& 1e-3\\
 4& 0.20 & 0.90 & 0.20 & 0.90 & 0.20 & 0.90 &0.90 & 0.20&  1e-3   & 7e-4  & 9e-4  &  1e-3  & 1e-3  & 1e-3 & 5e-4& 2e-3\\
 5& 0.20 & 0.90 & 0.20 & 0.90 & 0.20 & 0.89 &0.90 & 0.20&  1e-3   & 9e-4  & 9e-4  &  9e-4  & 2e-3  & 1e-3 & 5e-4& 2e-3\\
 6& 0.20 & 0.90 & 0.20 & 0.90 & 0.20 & 0.90 &0.90 & 0.21&  1e-3   & 7e-4  & 1e-3  &  9e-4  & 1e-3  & 7e-4 & 5e-4& 1e-3\\
 7& 0.30 & 0.30 & 0.90 & 0.29 & 0.90 & 0.89 &0.90 & 0.29&  2e-3   & 2e-3  & 9e-4  &  3e-3  & 1e-3  & 1e-3 & 8e-4& 3e-3\\
 8& 0.30 & 0.30 & 0.89 & 0.30 & 0.90 & 0.89 &0.90 & 0.30&  2e-3   & 2e-3  & 1e-3  &  2e-3  & 1e-3  & 1e-3 & 7e-4& 3e-3\\
 9& 0.30 & 0.30 & 0.90 & 0.30 & 0.89 & 0.90 &0.90 & 0.29&  2e-3   & 2e-3  & 1e-3  &  2e-3  & 1e-3  & 1e-3 & 6e-4& 3e-3\\
 10& 0.45 & 0.45 & 0.25 & 0.81 & 0.45 & 0.44 & 0.81 & 0.25 & 1e-3   & 1e-3  & 8e-4  &  1e-3  & 2e-3  & 2e-3 & 7e-4& 2e-3\\
 11& 0.45 & 0.25 & 0.45 & 0.45 & 0.80 & 0.45 & 0.81& 0.24 &  1e-3   & 1e-3  & 1e-3  &  3e-3  & 1e-3  & 2e-3 & 6e-4& 2e-3\\
 12& 0.24 & 0.45 & 0.45 & 0.45 & 0.45 & 0.81 & 0.81& 0.25 &  1e-3   & 1e-3  & 1e-3  &  2e-3  & 2e-3  & 2e-3 & 9e-4& 2e-3\\
 13& 0.22 & 0.22 & 0.22 & 0.41 & 0.40 & 0.41 & 0.72& 0.12 &  1e-3   & 1e-3  & 1e-3  &  2e-3  & 2e-3  & 2e-3 & 1e-3& 1e-3\\
 \hline
\end{tabular}
\caption{The mean and mse of estimated response probabilities for each latent classes of NIDA model based on 100 simulation runs.}\label{NIDApb}
\end{table}

\begin{table}[tbp]
\centering
\begin{tabular}{|c|cccccc|cccccc|}
\hline
\multirow{2}{*}{Item}& \multicolumn{6}{|c|}{Estimated parameters} & \multicolumn{6}{|c|}{MSE of parameters}\\
 & s1 & g1 & s2 & g2 & s3 & g3  & s1 & g1 & s2 & g2 & s3 & g3 \\
\hline
 1& 0.100 & 0.095 & - & - & - & - &  2e-4   & 2e-4  & -  &  -  & -  & -  \\
 2& 0.101 & 0.096 & - & - & - & - &  2e-4   & 2e-4  & -  &  -  & -  & -  \\
 3& 0.100 & 0.098 & - & - & - & - &  1e-4   & 2e-4  & -  &  -  & -  & -  \\
 4& - & - & 0.101 & 0.199 & - & - &  -  &  -  &  2e-4  &  2e-4  & -  & -  \\
 5& - & - & 0.101 & 0.200 & - & - &   -  & -  &  2e  &  3e-4  & -  & -  \\
 6& - & - & 0.100 & 0.199 & - & - &  -  & -  &  2e-4  &  3e-4  & -  & -  \\
 7& - & - & - & - & 0.103 & 0.299 &  -  & -  & -  &  -  & 3e-4  &  6e-4  \\
 8& - & - & - & - & 0.105 & 0.299 &  -  &  -  & -  &  -  & 2e-4  & 7e-4  \\
 9& - & - & - & - & 0.100 & 0.299 &  -  &  -  & -  & -  & 3e-4  & 6e-4  \\
 10& 0.100 & 0.498 & 0.100 & 0.498  & - & - &  1e-4  & 6e-4  &  1e-4  &  6e-4  & -  & -  \\
 11& 0.101 & 0.497 & - & -  & 0.101 & 0.497 &  1e-4  & 5e-4  & -  &  -  &  1e-4  & 5e-4  \\
 12& - & - & 0.104 & 0.499  & 0.104 & 0.499 & -  & -  &  1e-4  &  9e-4  & 1e-4 & 9e-4  \\
 13& 0.102 & 0.489 & 0.102 & 0.489  & 0.102 & 0.489  & 2e-4  & 2e-3 & 2e-4 & 2e-3 & 2e-4 & 2e-3 \\
 \hline
\end{tabular}
\caption{The mean and mse of the estimated parameters for NIDA model based on 100 simulation runs.}\label{NIDAe}
\end{table}

\subsection{Reduced NC-RUM model simualtion}

In our second simulation setting, we consider a three-attribute NIDA model. The   response probability is
$$p_{j,\aal} = \phi_j \prod_{k=1}^K(r_{jk})^{q_{jk}(1-\alpha_k)}.$$
	The $Q$-matrix and parameters $\phi_j, r_{jk}$ are listed in Table \ref{NCRUM}. We consider sample size 2000. The latent class of   attribute profile $(1,0,0)$ has probability $\pi_1$, $(0,1,0)$ has probability $\pi_2$,  $(0,0,1)$ has probability $\pi_3$,  $(1,1,0)$ has probability $\pi_4$ and  $(1,1,1)$ has probability $\pi_5$.
	We set the probabilities for classes $(1,0,1)$, $(0,1,1)$, and $(0,0,0)$ to zero. Hence, there are five classes in total. $\pi_1=\pi_2=\pi_3=\pi_4 = 1/6, \pi_5=1/3$. This model setup satisfies the identifiability condition.

\begin{table}[tbp]
\centering
\begin{tabular}{|c|ccc|cccc|}
\hline
\multirow{2}{*}{Item}&\multicolumn{3}{|c|}{\multirow{2}{*}{Q-matrix}} & \multicolumn{4}{|c|}{G and S parameters}\\
&\multicolumn{3}{|c|}{} & $\phi$ & r1 & r2 & r3 \\
\hline
 1&1 & 0 & 0 &0.9      & 0.2 & -  &  -   \\
 2&1 & 0 & 0 &0.9      & 0.2  & -  &  -  \\
 3&1 & 0 & 0 &0.9      & 0.2  & -  &  -   \\
 4&0 & 1 & 0 & 0.9      & -  &  0.3  &    \\
 5&0 & 1 & 0 & 0.9      & -  &  0.3  &    \\
 6&0 & 1 & 0 & 0.9      & - &  0.3  &    \\
 7&0 & 0 & 1 & 0.9     & -  & -  &  0.4    \\
 8&0 & 0 & 1 & 0.9      & - & -  &  0.4    \\
 9&0 & 0 & 1 & 0.9      & - & -  & 0.4    \\
 10&1 & 1 & 0 &0.9      & 0.5  &  0.7  & -    \\
 11&1 & 1 & 0 &0.9      & 0.5  & 0.7  &  -    \\
 12&1 & 0 & 1 &0.9      & 0.6  &  -  &  0.4   \\
 13&1 & 0 & 1 &0.9      & 0.6  &  -  &  0.4   \\
 14&0 & 1 & 1 &0.9      & -  &  0.5  & 0.5   \\
 15&0 & 1 & 1 &0.9      & -  &  0.5 &  0.5  \\
 \hline
\end{tabular}
\caption{Q-matrix and parameters setting for reduced NC-RUM model simulation}\label{NCRUM}
\end{table}

The estimated mixture probabilities (averaging over 100 independent replications) and their mean squared errors are listed in Table \ref{NCRUMpb}.
The estimated response probabilities for each class and their mean squared errors are listed in Table \ref{RUMpb}.
We see that the estimated probabilities of the remaining classes other than these 5 classes are very small (below 4e-3).
This suggests that the model identifies exactly 5 classes.

\begin{table}[tbp]
\centering
\begin{tabular}{|c|ccccc|ccccc|}
\hline
\multirow{2}{*}{Item}& \multicolumn{5}{|c|}{Estimated prob} & \multicolumn{5}{|c|}{MSE of prob}\\
 & C1 & C2 & C3 & C4 & C5 & C1 & C2 & C3 & C4 & C5 \\
\hline
 1& 0.91 & 0.17 & 0.17 & 0.90 & 0.90 &   4e-4   & 7e-4  & 5e-4  &  6e-4  & 6e-4  \\
 2& 0.91 & 0.17 & 0.17 & 0.90 & 0.90 &   4e-4   & 8e-4  & 7e-4  &  5e-4  & 2e-3 \\
 3& 0.90 & 0.17 & 0.18 & 0.90 & 0.90 &   4e-4   & 6e-4  & 7e-4  &  5e-4  & 4e-4  \\
 4& 0.26 & 0.90 & 0.27 & 0.90 & 0.90 &   1e-3   & 4e-4  & 6e-4  &  6e-4  & 8e-3  \\
 5& 0.27 & 0.90 & 0.27 & 0.90 & 0.90 &   1e-3   & 3e-4  & 9e-4  &  6e-4  & 3e-4  \\
 6& 0.26 & 0.90 & 0.27 & 0.90 & 0.90 &   1e-3   & 4e-4  & 9e-4  &  6e-4  & 8e-4  \\
 7& 0.36 & 0.36 & 0.90 & 0.36 & 0.90 &   1e-3   & 7e-4  & 3e-4  &  1e-3  & 2e-3  \\
 8& 0.36 & 0.36 & 0.90 & 0.36 & 0.90 &   8e-4   & 9e-4  & 5e-4  &  1e-3  & 4e-3  \\
 9& 0.35 & 0.36 & 0.90 & 0.36 & 0.90 &   1e-3   & 7e-4  & 3e-4  &  1e-3  & 4e-4  \\
 10& 0.63 & 0.45 & 0.31 & 0.90 & 0.90 &   8e-4   & 1e-3  & 7e-4  &  5e-4  & 4e-4  \\
 11& 0.63 & 0.45 & 0.31 & 0.90 & 0.90 &   1e-3   & 1e-3  & 8e-4  &  5e-4  & 1e-3  \\
 12& 0.36 & 0.21 & 0.54 & 0.36 & 0.90 &   7e-4   & 6e-4  & 8e-4  &  1e-3  & 2e-3  \\
 13& 0.35 & 0.21 & 0.54 & 0.36 & 0.90 &   1e-3   & 6e-4  & 9e-4  &  9e-4  & 8e-4  \\
 14& 0.22 & 0.45 & 0.45 & 0.45 & 0.90 &   7e-4   & 8e-4  & 8e-4  &  1e-4  & 8e-3  \\
 15& 0.22 & 0.44 & 0.45 & 0.45 & 0.90 &   7e-4   & 1e-3  & 7e-4  &  1e-3  & 6e-3  \\
\hline
\end{tabular}
\caption{The mean and mse of estimated response probabilities for each latent classes of NC-RUM model based on 100 simulation runs.}\label{RUMpb}
\end{table}

\begin{table}[htbp]\centering
\begin{tabular}{|c|ccccc|}
\hline
    & $C1$   & $C2$  & $C3$  & $C4$  &$C5$  \\
\hline
 mean &0.147    & 0.153  & 0.153  & 0.163  &0.336 \\
 mse  &7e-4    & 4e-4  & 4e-4  & 2e-4  & 3e-4 \\
\hline
\end{tabular}
\caption{The mean and mse of the estimated class probabilities based on 100 simulation runs.}\label{NCRUMpb}
\end{table}

	We also use the Silhouette method to cluster item responses probabilities for each item $j$. The proportion of the correctly estimated item's partial information structure is 88.0\%.
    We also calculate the frequentist properties of parameters of reduced NU-RUM model.
	The result is listed in Table \ref{NCRUMe}.

\begin{table}[tbp]
\centering
\begin{tabular}{|c|cccc|cccc|}
\hline
\multirow{2}{*}{Item}& \multicolumn{4}{|c|}{G and S estimates} & \multicolumn{4}{|c|}{mse of G and S}\\
& $\phi$ & r1 & r2 & r3 & $\phi$ & r1 & r2 & r3 \\
\hline
 1& 0.900 & 0.189 & - & - &1e-4     & 0.001 & -  &  -   \\
 2& 0.902 & 0.194 & - & - &2e-4      & 0.001  & -  &  -  \\
 3& 0.904 & 0.192 & - & - &2e-4      & 0.001  & -  &  -   \\
 4& 0.900 & - & 0.300 & - & 2e-4      & -  &  0.001  &    \\
 5& 0.900 & - & 0.291 & - & 2e-4      & -  &  0.001  &    \\
 6& 0.902 & - & 0.294 & - & 3e-4      & - &  0.001  &    \\
 7& 0.901 & - & -  & 0.395 & 3e-4     & -  & -  &  0.001    \\
 8& 0.901 & - & - & 0.398 & 3e-4      & - & -  &  0.001    \\
 9& 0.901 & - & - & 0.396 & 3e-4      & - & -  & 0.001    \\
 10& 0.899 & 0.491 & 0.710 & - & 2e-4      & 0.002  &  0.003  & -    \\
 11& 0.896 & 0.500 & 0.701 & - & 2e-4     & 0.002  & 0.002  &  -    \\
 12& 0.896 & 0.602 & - & 0.398 & 3e-4      & 0.002  &  -  &  0.002   \\
 13& 0.897 & 0.597 & - & 0.400 & 3e-4      & 0.002  &  -  &  0.002   \\
 14& 0.899 & - & 0.500 &0.494 & 3e-4      & -  &  0.002  & 0.002   \\
 15& 0.896 & - & 0.498 &0.497 & 3e-4      & -  &  0.002 &  0.002  \\
 \hline
\end{tabular}
\caption{The mean and mse of the estimated parameters for reduced NC-RUM model based on 100 simulations.}\label{NCRUMe}
\end{table}

\subsection{LCDM model simulation}

In our third simulation setting, we consider a three-attribute LCDM model which is defined as

\begin{equation}\label{lcdm}\nonumber
p_{j,\aal} = \frac{\exp(\boldsymbol{\lambda}_j^{T} h(\aal,\qq_j) -\eta)}{1+\exp(\boldsymbol{\lambda}_j^{T} h(\aal,\qq_j) -\eta_j)},
\end{equation}
where
$$ \boldsymbol{\lambda}_j^{T} h(\aal,\qq_j) = \sum\limits_{k=1}^K \lambda_{jk}(\alpha_k q_{jk}) + \sum\limits_{k=1}^K \sum\limits_{\tilde{k}>k} \lambda_{jk\tilde{k}}(\alpha_k \alpha_{\tilde{k}}q_{jk}q_{j\tilde{k}}) + \cdots,$$
and $\boldsymbol{\lambda}_j, \eta_j$ are prespecified parameters.

The $Q$-matrix and parameters are listed in Table \ref{LCDM}.
	We generate date sets containing 2000 observations. The latent class of attribute profile $(1,0,0)$ has mixture probability $\pi_1$, $(0,1,0)$ has probability $\pi_2$,  $(0,0,1)$ has probability $\pi_3$, $(1,1,0)$ has probability $\pi_4$,  $(1,0,1)$ has probability $\pi_5$, $(0,1,1)$ has probability $\pi_6$,  $(1,1,1)$ has probability $\pi_7$, and  $(0,0,0)$ has probability $\pi_8$.
	There are eight classes in total. $\pi_1=\pi_2=\pi_3=\pi_4=\pi_5=\pi_6=\pi_7=\pi_8=1/8$. This model setup satisfies the identifiability condition.

\begin{table}[tbp]
\centering
\begin{tabular}{|c|ccc|c|}
\hline
\multirow{2}{*}{Item}&\multicolumn{3}{|c|}{\multirow{2}{*}{Q-matrix}} & \multicolumn{1}{|c|}{ Parameters}\\
&\multicolumn{3}{|c|}{} & $\boldsymbol{\lambda}, \eta$ \\
\hline
 1&1 & 0 & 0  & $\lambda_1 =4, \eta=-2$         \\
 2&1 & 0 & 0  & $\lambda_1 =4, \eta=-2$         \\
 3&1 & 0 & 0  & $\lambda_1 =4, \eta=-2$          \\
 \hline
 4&0 & 1 & 0  & $\lambda_2 =4, \eta=-2$      \\
 5&0 & 1 & 0  & $\lambda_2 =4, \eta=-2$       \\
 6&0 & 1 & 0  & $\lambda_2 =4, \eta=-2$          \\
 \hline
 7&0 & 0 & 1  & $\lambda_3 =4, \eta=-2$          \\
 8&0 & 0 & 1  & $\lambda_3 =4, \eta=-2$       \\
 9&0 & 0 & 1  & $\lambda_3 =4, \eta=-2$       \\
 \hline
 10&1 & 1 & 0  & $\lambda_1 =2, \lambda_2=2, \lambda_{1,2}=0, \eta=-2$       \\
 11&1 & 1 & 0  & $\lambda_1 =2, \lambda_2=2, \lambda_{1,2}=0, \eta=-2$      \\
 12&1 & 0 & 1  & $\lambda_1 =2, \lambda_3=2, \lambda_{1,3}=0, \eta=-2$      \\
 13&1 & 0 & 1  & $\lambda_1 =2, \lambda_3=2, \lambda_{1,3}=0, \eta=-2$      \\
 14&0 & 1 & 1  & $\lambda_2 =2, \lambda_3=2, \lambda_{2,3}=0, \eta=-2$      \\
 15&0 & 1 & 1  & $\lambda_2 =2, \lambda_3=2, \lambda_{2,3}=0, \eta=-2$       \\
 \hline
 \multirow{2}{*}{16}&\multirow{2}{*}{1} & \multirow{2}{*}{1} & \multirow{2}{*}{1} & $\lambda_1 =1, \lambda_2=1, \lambda_3=1, \eta=-2 $\\
 & & & & $\lambda_{1,2}=\lambda_{1,3}=\lambda_{2,3}=0,\lambda_{1,2,3}=1$ \\
 \hline
\end{tabular}
\caption{Q-matrix and parameter setting for LCDM model simulation.}\label{LCDM}
\end{table}

The estimated mixture probabilities (averaging over 100 independent replications) and their mean squared errors are given in Table \ref{LCDMpi}.
The estimated probabilities of the remaining classes other than these 8 classes are all less than 3e-3.
Also, the estimated response probabilities for each item are listed in Table \ref{LCDMpb}
\begin{table}[htbp]\centering
\begin{tabular}{|c|cccccccc|}
\hline
    & $C1$   & $C2$  & $C3$  & $C4$  &$C5$ & $C6$ & $C7$ & $C8$   \\
\hline
 mean &0.123    & 0.121  & 0.122  & 0.119  &0.121 & 0.121 & 0.125 & 0.125\\
 mse &1.1e-4    & 1.1e-4  & 1.7e-4  & 1.1e-4  & 1.0e-4 & 1.3e-4 & 1.1e-4 & 1.0e-4\\
\hline
\end{tabular}
\caption{The mean and mse of the estimated class probability of each class based on 100 simulation runs.}\label{LCDMpi}
\end{table}

\begin{table}[tbp]
\centering
\begin{tabular}{|c|cccccccc|cccccccc|}
\hline
\multirow{2}{*}{I}& \multicolumn{8}{|c|}{Estimated prob} & \multicolumn{8}{|c|}{MSE of prob}\\
 & C1 & C2 & C3 & C4 & C5 & C6 & C7 & C8 & C1 & C2 & C3 & C4 & C5 & C6 & C7 & C8\\
\hline
 1& 0.88 & 0.12 & 0.12 & 0.88 & 0.88 & 0.12 & 0.88 & 0.12&  6e-4   & 7e-4  & 5e-4  &  6e-4  & 5e-4  & 4e-4 & 1e-2 & 7e-4\\
 2& 0.88 & 0.12 & 0.12 & 0.88 & 0.88 & 0.12 & 0.87 & 0.13&  6e-4   & 7e-4  & 6e-4  &  6e-4  & 6e-4  & 7e-4 & 3e-3 & 9e-4 \\
 3& 0.88 & 0.12 & 0.12 & 0.88 & 0.88 & 0.12 & 0.88 & 0.12&  6e-4   & 5e-4  & 6e-4  &  5e-4  & 5e-4  & 6e-4 & 7e-4 & 7e-4 \\
 4& 0.12 & 0.88 & 0.12 & 0.88 & 0.12 & 0.88 & 0.88 & 0.12&  7e-4   & 6e-4  & 6e-4  &  6e-4  & 8e-4  & 6e-4 & 3e-3 & 9e-4 \\
 5& 0.12 & 0.88 & 0.12 & 0.88 & 0.12 & 0.88 & 0.88 & 0.12&  6e-4   & 7e-4  & 5e-4  &  6e-4  & 8e-4  & 7e-4 & 1e-3 & 9e-4 \\
 6& 0.12 & 0.88 & 0.12 & 0.88 & 0.12 & 0.88 & 0.87 & 0.13&  5e-4   & 6e-4  & 6e-4  &  5e-4  & 6e-4  & 6e-4 & 2e-3 & 6e-4 \\
 7& 0.12 & 0.12 & 0.88 & 0.12 & 0.88 & 0.88 & 0.88 & 0.12&  6e-4   & 5e-4  & 7e-4  &  7e-4  & 7e-4  & 6e-4 & 3e-3 & 3e-3 \\
 8& 0.12 & 0.12 & 0.88 & 0.12 & 0.88 & 0.88 & 0.88 & 0.13&  8e-4   & 4e-4  & 6e-4  &  6e-4  & 5e-4  & 6e-4 & 8e-4 & 4e-3 \\
 9& 0.12 & 0.12 & 0.88 & 0.12 & 0.88 & 0.88 & 0.87 & 0.12&  7e-4   & 5e-4  & 6e-4  &  8e-4  & 7e-4  & 6e-4 & 1e-2 & 1e-2 \\
 10& 0.50 & 0.51 & 0.12 & 0.88 & 0.50 & 0.49 & 0.88 & 0.12&  8e-4   & 1e-3  & 6e-4  &  5e-4  & 1e-3  & 1e-3 & 8e-4 & 1e-3 \\
 11& 0.50 & 0.50 & 0.12 & 0.88 & 0.50 & 0.50 & 0.87 & 0.12&  1e-3   & 1e-3  & 6e-4  &  7e-4  & 1e-3  & 1e-3 & 5e-3 & 4e-3 \\
 12& 0.50 & 0.12 & 0.50 & 0.49 & 0.88 & 0.50 & 0.88 & 0.12&  1e-3   & 6e-4  & 1e-3  &  1e-3  & 6e-4  & 1e-3 & 2e-3 & 8e-3 \\
 13& 0.50 & 0.12 & 0.50 & 0.50 & 0.88 & 0.49 & 0.88 & 0.12&  1e-3   & 4e-4  & 9e-4  &  1e-3  & 5e-4  & 1e-3 & 2e-3 & 4e-3 \\
 14& 0.12 & 0.50 & 0.50 & 0.49 & 0.50 & 0.88 & 0.88 & 0.12&  6e-4   & 1e-3  & 2e-3  &  1e-3  & 1e-3  & 6e-4 & 3e-3 & 3e-3 \\
 15& 0.12 & 0.50 & 0.50 & 0.50 & 0.50 & 0.88 & 0.88 & 0.12&  6e-4   & 1e-3  & 1e-3  &  1e-3  & 1e-3  & 6e-4 & 9e-4 & 1e-3 \\
 16& 0.27 & 0.27 & 0.27 & 0.50 & 0.50 & 0.50 & 0.87 & 0.12&  1e-3   & 1e-3  & 9e-4  &  1e-3  & 1e-3  & 1e-3 & 1e-2 & 9e-4 \\
 \hline
\end{tabular}
\caption{The mean and mse of estimated response probabilities for each latent classes of LCDM model based on 100 simulation runs.}\label{LCDMpb}
\end{table}

We use the Silhouette method to cluster item response probabilities for each item $j$ to estimate the partial information structure. The  proportion of correctly estimated item partial information structure is 96.5\%.
We calculate the mean and mean square error of the estimators of the LCDM model.
The results are listed in Table \ref{LCDMe}.

\begin{table}[tbp]
\centering
\begin{tabular}{|c|c|c|}
\hline
\multirow{2}{*}{Item}&\multicolumn{1}{|c|}{Estimates} & \multicolumn{1}{|c|}{ MSE of Estimates}\\
& $\boldsymbol{\lambda}, \eta$ & $mse_{\lambda}, mse_{\eta}$ \\
\hline
 1& $\lambda_1 = 4.00, \eta=-1.99$ & $m_{\lambda_1}=0.03 , m_{\eta}=0.01 $         \\
 2& $\lambda_1 =3.97, \eta=-1.97$ & $m_{\lambda_1}=0.03 , m_{\eta}=0.01 $          \\
 3& $\lambda_1 =3.99, \eta=-1.99$ & $m_{\lambda_1}=0.03 , m_{\eta}=0.02 $           \\
 \hline
 4& $\lambda_2 =3.97, \eta=-1.98$ & $m_{\lambda_2}=0.03 , m_{\eta}=0.02 $       \\
 5& $\lambda_2 =3.99, \eta=-1.99$ & $m_{\lambda_2}=0.03 , m_{\eta}=0.02 $        \\
 6& $\lambda_2 =3.98, \eta=-1.99$ & $m_{\lambda_2}=0.04 , m_{\eta}=0.02 $           \\
 \hline
 7& $\lambda_3 =4.00, \eta=-2.01$ & $m_{\lambda_3}=0.04 , m_{\eta}=0.02 $           \\
 8& $\lambda_3 =3.98, \eta=-1.99$ & $m_{\lambda_3}=0.03 , m_{\eta}=0.01 $        \\
 9& $\lambda_3 =3.96, \eta=-1.99$ & $m_{\lambda_3}=0.03 , m_{\eta}=0.01 $        \\
 \hline
 10& $\lambda_1 =2.00, \lambda_2=2.00, \lambda_{1,2}=-0.03, \eta=-2.00$&$m_{\lambda_1}=.05, m_{\lambda_2}=.05, m_{\lambda_{1,2}}=.11, m_{\eta}=.03$   \\
 11& $\lambda_1 =2.02, \lambda_2=2.00, \lambda_{1,2}=-0.01, \eta=-2.01$&$m_{\lambda_1}=.03, m_{\lambda_2}=.04, m_{\lambda_{1,2}}=.08, m_{\eta}=.02$   \\
 12& $\lambda_1 =1.98, \lambda_3=1.96, \lambda_{1,3}=0.01, \eta=-1.97$&$m_{\lambda_1}=.03, m_{\lambda_3}=.04, m_{\lambda_{1,3}}=.08, m_{\eta}=.02$   \\
 13& $\lambda_1 =1.97, \lambda_3=1.96, \lambda_{1,3}=0.02, \eta=-1.96$&$m_{\lambda_1}=.04, m_{\lambda_3}=.04, m_{\lambda_{1,3}}=.10, m_{\eta}=.03$   \\
 14& $\lambda_2 =2.00, \lambda_3=1.99, \lambda_{2,3}=0.03, \eta=-2.00$&$m_{\lambda_2}=.05, m_{\lambda_3}=.05, m_{\lambda_{2,3}}=.09, m_{\eta}=.03$   \\
 15& $\lambda_2 =1.96, \lambda_3=1.96, \lambda_{2,3}=0.03, \eta=-1.97$&$m_{\lambda_2}=.05, m_{\lambda_3}=.05, m_{\lambda_{2,3}}=.09, m_{\eta}=.03$    \\
 \hline
 \multirow{2}{*}{16}& $\lambda_1 =0.97, \lambda_2=0.95,\lambda_3=0.96,\eta= -1.98$&$m_{\lambda_1}=.05,m_{\lambda_2}=.07,m_{\lambda_3}=.08,m_{\eta}=.07$\\
 & $\lambda_{1,2}=0.06 \lambda_{1,3}=0.06\lambda_{2,3}= 0.08 ,\lambda_{1,2,3}=0.86 $&$m_{\lambda_{1,2}}=.15, m_{\lambda_{1,3}}=.16, m_{\lambda_{2,3}}=.11 ,m_{\lambda_{1,2,3}}=.31 $ \\
 \hline
\end{tabular}
\caption{The mean and mse of the estimated parameters for LCDM model based on 100 simulations.}\label{LCDMe}
\end{table}

\section{Real data analysis}\label{SecReal}

We apply the proposed method to a subset of the National Epidemiological Survey
on Alcohol and Related Conditions (NESARC) (Grant et al., 2003 \cite{Nesarc}) concerning social phobia. There are in total 13 questions that are presented in Table \ref{t1}.
We fit the latent Dirichlet allocation model and estimate the partial information structure via the procedure in Section 4. The results are summarized as follows.

To obtain meaningful and stable estimates, we consider large latent classes whose probabilities are over $2\%$. According to the fitted model, there are five such latent classes, each of which is over $2\%$ of the population. The estimated posterior probability of each class is $\pi_1=0.11$, $\pi_2=0.37$, $\pi_3=0.10$, $\pi_4=0.20$, and $\pi_5=0.15$. They add up to 93\% of the population.
The estimated probabilities are presented in Table \ref{posterior}.

We apply the K-means method to the item response probabilities of each item. The number of clusters are selected by the silhouette method.
The partial information is then obtained via this cluster analysis. The results are summarized in Table \ref{cluster}. From Table \ref{cluster}, we can see the 13 items may be divided into three groups according to their functioning. Items 1-4 can differentiate between classes 1,2,3,5 and class 4. Items 5-8 differentiate classes 1,5 and classes 2,3,4. Items 9-13 differentiate classes 3,5 and classes 1,2,4.
Furthermore, we can see that items 2,3,7,8,12,13 differentiate multiple groups, indicating that these items may be more informative.

We further construct a parametrization of the item response function based on the estimated partial information. It turns out to be the Reduced NC-RUM model with number of attributes $K=3$ whose item response function is defined as
\begin{equation}\label{NC-RUM}
    p_{j,\aal}= \phi_j \prod\limits_{k=1}^{K}(r_{jk})^{q_{jk(1-\alpha_k)}},
\end{equation}
where $\phi_j$ is the correct response probability for subjects who possess all required attributes and $r_{jk}$, $0 < r_{jk} < 1$, is the penalty parameter for not possessing the $k$-th attribute. The corresponding item parameters are $\mathbf{\theta} = \{\phi_j, r_{jk}: j=1,\cdots,J, k=1,\cdots,K\}$.
Based on Table \ref{cluster}, we could parameterize each class, $C1 = (1,1,0)$, $C2 = (1,0,0)$, $C3=(1,0,1)$, $C4=(0,0,0)$, and $C5 = (1,1,1)$.
Also, an estimated $Q$-matrix under this parameterization is provided in Table \ref{Qmat}. The estimated parameters for the reduced NC-RUM model under estimated $Q$-matrix are given in Table \ref{parameterize}.
In this parameterization, we obtain an approximate interpretation that attribute 1 corresponds to ``public performance", attribute 2 to ``scrutiny", and  attribute 3 to ``interaction".


Table \ref{posterior} presents posterior mean of the response probability for each item under each class.  Based on the loadings in this table, we may interpret these latent classes as follows.
Class 1 is strongly associated with items 1-4 which are related to public performance and items 5-8 which are related to close scrutiny. Thus, this group is characterized by strong fear of public performance and close scrutiny.
Class 2 has high response probabilities of items 1-4. This shows that it possesses attribute 1 only, i.e. those people in this class are afraid of public performance, but not close scrutiny and interaction.
Class 3 has relatively high response probabilities of items 1-4 and 9-13 and relatively low response probability of items 5-8, which means that this class is more likely related with attributes 1 and 3. In other words, people from group 3 may have ``fears'' of public performance and interaction with other people.
Class 4 is loosely associated with all items. The all item response probabilities are low which indicates that this class may not be connected with any attributes.
Class 5 has the highest response probabilities among all items. This shows that class 5 is associated with all items. Hence, people from class 5 are likely to possess 3 attributes. In other words, class 5 corresponds to, using a technical term, the generalized social anxiety disorder subtype (``fears most social situations''; American Psychiatric Association, 1994 \cite{APA}).

\begin{table}[htbp]\centering
\begin{tabular}{ll}
\hline
ID          & Have you ever had a strong fear or avoidance of  \\
\hline
1          & speaking in front of other people?     \\
2          & taking part or speaking in class?    \\
3          & taking part or speaking at a meeting?      \\
4          & performing in front of other people?          \\
5          & being interviewed?        \\
6          & writing when someone watches?          \\
7          & taking an important exam?     \\
8          & speaking to an authority Lgure?    \\
9          & eating or drinking in front of other people?    \\
10         & having conversations with people you don't know well?    \\
11         & going to parties and social gatherings?    \\
12         & dating?   \\
13         & being in a small group situation?   \\
\hline
\end{tabular}
\caption{The content of 13 items for the social anxiety disorder data.}\label{t1}
\end{table}

\begin{table}[htbp]\centering
\begin{tabular}{|l|ccccc|}
\hline
   item   &          C1&    C2&    C3&    C4&   C5 \\
\hline
    1 &        0.93 & 0.93 & 0.95 & 0.25 & 0.97 \\
    2 &        0.95 & 0.78 & 0.7 & 0.07 & 0.97 \\
    3 &        0.85 & 0.55 & 0.59 & 0.02 & 0.96 \\
    4 &        0.86 & 0.56 & 0.82 & 0.11 & 0.94 \\
    5 &        0.73 & 0.15 & 0.34 & 0.12 & 0.71 \\
    6 &        0.25 & 0.06 & 0.07 & 0.05 & 0.4 \\
    7 &        0.65 & 0.18 & 0.34 & 0.12 & 0.74 \\
    8 &       0.65 & 0.18 & 0.37 & 0.15 & 0.78 \\
    9 &        0.03 & 0.01 & 0.37 & 0.04 & 0.39 \\
    10 &        0.23 & 0.17 & 0.88 & 0.12 & 0.93 \\
    11 &       0.25 & 0.18 & 0.87 & 0.14 & 0.91 \\
    12 &        0.33 & 0.18 & 0.33 & 0.13 & 0.60 \\
    13 &        0.10 & 0.01 & 0.11 & 0.01 & 0.43 \\
\hline
\end{tabular}
\caption{The estimated probability matrix based on latent Dirichlet allocation model for the
social anxiety disorder data. Each row corresponding to the item response probability for each class.}\label{posterior}
\end{table}

\begin{table}[htbp]\centering
\begin{tabular}{|l|ccccc|}
\hline
  item   &          C1&    C2&    C3&    C4&   C5 \\
\hline
   1&         $\bullet$ & $\bullet$ & $\bullet$ & $\circ$ & $\bullet$ \\
   2&         $\dag$ & $\bullet$ & $\bullet$ & $\circ$ & $\dag$ \\
   3&         $\dag$ & $\bullet$ & $\bullet$ & $\circ$ & $\dag$ \\
   4&         $\dag$ & $\bullet$ & $\dag$ & $\circ$ & $\ddag$ \\
   5&         $\bullet$ & $\circ$ & $\circ$ & $\circ$ & $\bullet$ \\
   6&         $\bullet$ & $\circ$ & $\circ$ & $\circ$ & $\bullet$ \\
   7&         $\dag$ & $\circ$ & $\bullet$ & $\circ$ & $\ddag$ \\
   8&         $\dag$ & $\circ$ & $\bullet$ & $\circ$ & $\ddag$ \\
   9&         $\circ$ & $\circ$ & $\bullet$ & $\circ$ & $\bullet$ \\
   10&         $\circ$ & $\circ$ & $\bullet$ & $\circ$ & $\bullet$ \\
   11&         $\circ$ & $\circ$ & $\bullet$ & $\circ$ & $\bullet$ \\
   12&         $\bullet$ & $\circ$ & $\bullet$ & $\circ$ & $\dag$ \\
   13&         $\bullet$ & $\circ$ & $\bullet$ & $\circ$ & $\dag$ \\
\hline
\end{tabular}
\caption{The estimated cluster matrix based on estimated posterior probability matrix by using k-means method. Here we use symbol $\circ, \bullet, \dag, \ddag$ to represent the level 1 to 4 respectively. Level 1 represents the lowest probability level, the level 4 represent the highest probability level.}
\label{cluster}
\end{table}

\begin{table}[htbp]\centering
\begin{tabular}{l}
$ Q = \left(
          \begin{array}{ccc}
            1 & 0 & 0 \\
            1 & 1 & 0 \\
            1 & 1 & 0 \\
            1 & 1 & 1 \\
            0 & 1 & 0 \\
            0 & 1 & 0 \\
            0 & 1 & 1 \\
            0 & 1 & 1 \\
            0 & 0 & 1 \\
            0 & 0 & 1 \\
            0 & 0 & 1 \\
            0 & 1 & 1 \\
            0 & 1 & 1 \\
          \end{array}
        \right)$ \\
\end{tabular}
\caption{The estimated Q-matrix based on the three dimensional Reduced NC-RUM model for the
social anxiety disorder data.}\label{Qmat}
\end{table}

\begin{table}[htbp]\centering
\begin{tabular}{|c|llll|}
\hline
Item & $\phi_{\cdot}$          & $r_{\cdot,1}$  & $r_{\cdot,2}$  & $r_{\cdot,3}$\\
\hline
1 & $\phi_1=0.95$          & $r_{1,1}=0.26$ & -  &  -   \\
2 & $\phi_2=0.95$          & $r_{2,1}=0.10$ & $r_{2,2}=0.79$ & - \\
3 & $\phi_3=0.9$           & $r_{3,1}=0.03 $ & $r_{3,2}= 0.67 $ &  -   \\
4 & $\phi_4=0.95$          & $r_{4,1}=0.16$& $r_{4,2}=0.80$ & $ r_{4,3}=0.80 $        \\
5 & $\phi_5=0.7$           & - & $r_{5,2}=0.21$ & -      \\
6 & $\phi_6=0.3$           & - &$r_{6,2}=0.20 $ & -        \\
7 & $\phi_7=0.75$          & - &$r_{7,2}=0.40$ &$r_{7,3}=0.80$    \\
8 & $\phi_8=0.8$           & - &$r_{8,2}=0.37$ &$r_{8,3}=0.75$   \\
9 & $\phi_9=0.4$           & - & - &$r_{9,3}=0.08$   \\
10 & $\phi_{10}=0.9$        & - & - &$r_{10,3}=0.22$   \\
11 & $\phi_{11}=0.9$        & - & - &$r_{11,3}=0.17 $  \\
12 & $\phi_{12}=0.6$        & - &$r_{12,2}=0.55 $ &$r_{13,3}=0.55 $ \\
13 & $\phi_{13}=0.45$       & -&$r_{13,2}=0.20 $ &$r_{13,3}=0.20 $ \\
\hline
\end{tabular}
\caption{The table of parameters for the three dimensional Reduced NC-RUM model for the
social anxiety disorder data. Some parameters in model could not be estimated due to structure of Reduced NC-RUM model.}\label{parameterize}
\end{table}

\section{Discussion}

This paper deals with certain fundamental issues in latent class models under a general framework. In particular, it does not require the usual $Q$-matrix structure which is commonly assumed in most diagnostic classification models.
We established four theorems on identifiability under various model settings including binary response case, multi-categorical response case and multi-level attribute case. In particular, we provide easy-to-check sufficient conditions in Theorem 4 that are applicable to a general class of latent class models. Further in Theorem 5, we show the existence of a consistent estimator which can asymptotically identify partial information structure of items. We construct an appropriate estimator by using latent Dirichlet allocation method, which does not require pre-specification of the number of latent classes. The simulation results show the proposed method works well under a variety of settings.

  There are some recent works on identifiability for certain latent class models. Xu, Shang, Zhang (2016) \cite{Xu1,Xu2,Xu3} give several identifiability results for $Q$-matrix and parameters under the restricted latent class setting with binary item responses. Their main result provides a slightly weaker sufficient condition version of our corollary 1. This is because that our results are applicable to general latent class models with multi-categorical responses and diagnostic classification models with multi-level attributes.

 Because the existing methods require specification of a particular $Q$-matrix based model, it is of interest to develop model checking methods for any departure from the model specifications.
 Identifiability and estimation methods in this paper are applicable to the more general settings and, therefore, could be useful for developing such model checking methods.

\bibliographystyle{plain}
\bibliography{IRT2,bibstat,CDM,general}

\begin{thebibliography}{10}

\bibitem{Allman}
E.S. Allman, C.~Matias, and J.A. Rhodes.
\newblock Identifiablity of parameters in latent structure models with many
  observed variables.
\newblock {\em The Annals of Statistics}, 37(6A):3099--3132, 2009.

\bibitem{APA}
American~Psychiatric Association.
\newblock {\em Diagnostic and statistical manual of mental disorders, fourth
  edition}.
\newblock Washington, DC: American Psychiatric Association, 1994.

\bibitem{Chen}
Y.~Chen, J.~Liu, G.~Xu, and Z.~Ying.
\newblock Statistical analysis of {Q}-matrix based diagnostic classification
  models.
\newblock {\em Journal of the American Statistical Association},
  110(510):850--866, 2015.

\bibitem{CDCAT}
Y.~Chen, J.~Liu, and Z.~Ying.
\newblock Online item calibration for {Q}-matrix in {CD}-{CAT}.
\newblock {\em Applied Psychological Measurement}, 39(1):5--15, 2015.

\bibitem{Chiu}
C.Y. Chiu, J.A. Douglas, and X.~Li.
\newblock Cluster analysis for cognitive diagnosis: Theory and applications.
\newblock {\em Psychometrika}, 74(4):633--665, 2009.

\bibitem{delaTorre}
J.~de~la Torre.
\newblock The generalized dina model framework.
\newblock {\em Psychometrika}, 76:179--199, 2011.

\bibitem{dela}
J.~de~la Torre and J.A. Douglas.
\newblock Higher order latent trait models for cognitive diagnosis.
\newblock {\em Psychometrika}, 69:333--353, 2004.

\bibitem{DiBello}
L.V. DiBello, W.F. Stout, and L.~Roussos.
\newblock Unified cognitive psychometric assessment likelihood-based
  classification techniques.
\newblock {\em Cognitively diagnostic assessment. Hillsdale, NJ: Erlbaum},
  pages 361--390, 1995.

\bibitem{Dunson}
D.B. Dunson and C.~Xing.
\newblock Nonparametric bayes modeling of multivariate categorical data.
\newblock {\em Journal of the American Statistical Association},
  104(487):1042--1051, 2009.

\bibitem{Nesarc}
B.~F. Grant, K.~Kaplan, J.~Shepard, and T.~Moore.
\newblock Source and accuracy statement for wave 1 of the 2001-2002 national
  epidemiologic survey on alcohol and related conditions.
\newblock {\em Bethesda, MD: National Institute on Alcohol Abuse and
  Alcoholism}, 2003.

\bibitem{Hartz}
S.M. Hartz.
\newblock A {B}ayesian framework for the unified model for assessing cognitive
  abilities: Blending theory with practicality.
\newblock {\em Doctoral Dissertation, University of Illinois,
  Urbana-Champaign}, 2002.

\bibitem{Joreskog}
K.G. Joreskog.
\newblock A general appraoch to confirmatory maximum likelihood factor
  analysis.
\newblock {\em Psychometrica}, 34:183--202, 1969.

\bibitem{Junker0}
B.W. Junker.
\newblock Some statistical models and computational methods that may be useful
  for cognitively-relevant assessment.
\newblock {\em Technical Report, available from http://www.stat.cmu.edu/ $\sim$
  brian/nrc/cfa/documents/final.pdf}, 1999.

\bibitem{Junker}
B.W. Junker and K.~Sijtsma.
\newblock Cognitive assessment models with few assumptions, and connections
  with nonparametric item response theory.
\newblock {\em Applied Psychological Measurement}, 25:258--272, 2001.

\bibitem{Kruskal}
J.B. Kruskal.
\newblock Three-way arrays: Rank and uniqueness of trilinear decompositions,
  with application to arithmetic complexity and statistics.
\newblock {\em Linear Algebra and its Application}, 18(2):95--138, 1977.

\bibitem{Kullback}
S.~Kullback and R.~A. Leibler.
\newblock On information and sufficiency.
\newblock {\em Annals of Mathematical Statistics}, 22(1):79--86, 1951.

\bibitem{AHM}
J.~P. Leighton, M.~J. Gierl, and S.~M. Hunka.
\newblock The attribute hierarchy model for cognitive assessment: A variation
  on tatsuoka's rule-space approach.
\newblock {\em Journal of Educational Measurement}, 41:205--237, 2004.

\bibitem{DATAdriven}
J.~Liu, G.~Xu, and Z.~Ying.
\newblock Data-driven learning of q-matrix.
\newblock {\em Applied Psychological Measurement}, 36(7):609--618, 2012.

\bibitem{Selflearn}
J.~Liu, G.~Xu, and Z.~Ying.
\newblock Theory of self-learning of q-matrix.
\newblock {\em Applied Psychological Measurement}, 19(5A):1790--1817, 2013.

\bibitem{RoussosTH}
Louis~A. Roussos, Jonathan~L. Templin, and Robert~A. Henson.
\newblock Skills diagnosis using {IRT}-based latent class models.
\newblock {\em Journal of Educational Measurement}, 44:293--311, 2007.

\bibitem{Rupp2008}
A.A. Rupp and J.L. Templin.
\newblock Unique characteristics of diagnostic classification models: A
  comprehensive review of the current state-of-the-art.
\newblock {\em Measurement: Interdisciplinary Research and Perspective},
  6:219--262, 2008.

\bibitem{Rupp}
A.A. Rupp, J.L. Templin, and R.~A. Henson.
\newblock {\em Diagnostic Measurement: Theory, Methods, and Applications}.
\newblock Guilford Press, 2010.

\bibitem{Sethur}
J.~Sethuraman.
\newblock A constructive definition of dirichlet priors.
\newblock {\em Statistica Sinica}, 4:639--650, 1994.

\bibitem{Stout2007}
W.~Stout.
\newblock Skills diagnosis using {IRT}-based continuous latent trait models.
\newblock {\em Journal of Educational Measurement}, 44:313--324, 2007.

\bibitem{Postmean}
H~Strasser.
\newblock The asymptotic equivalence of bayes and maximum likelihood
  estimation.
\newblock {\em Journal of Multivariate Analysis}, 5(2):206--226, 1975.

\bibitem{TatsuokaC}
C.~Tatsuoka.
\newblock Data-analytic methods for latent partially ordered classification
  models.
\newblock {\em Applied Statistics (JRSS-C)}, 51:337--350, 2002.

\bibitem{Tatsuoka1985}
K.K. Tatsuoka.
\newblock A probabilistic model for diagnosing misconceptions in the pattern
  classification approach.
\newblock {\em Journal of Educational Statistics}, 12:55--73, 1985.

\bibitem{Tatsuoka}
K.K. Tatsuoka.
\newblock {\em Cognitive assessment: an introduction to the rule space method}.
\newblock CRC Press, 2009.

\bibitem{He}
J.~Templin, X.~He, L.~Roussos, and W.~Stout.
\newblock The pseudo-item method: a simple technique for analysis of polytomous
  data with the fusion model.
\newblock {\em External Diagnostic Research Group Technical Report}, 2003.

\bibitem{Templin2006}
J.L. Templin.
\newblock {CDM}: cognitive diagnosis modeling with {M}plus [computer software].
  (available from http://jtemplin.myweb.uga.edu/cdm/cdm.html).
\newblock 2006.

\bibitem{Templin}
J.L. Templin and R.A. Henson.
\newblock Measurement of psychological disorders using cognitive diagnosis
  models.
\newblock {\em Psychological Methods}, 11:287--305, 2006.

\bibitem{Van}
A.W. {Van Der Vaart}.
\newblock {\em Asymptotic statistics}.
\newblock Cambridge University Press, 1998.

\bibitem{von}
M.~von Davier.
\newblock {\em A general diagnosis model applied to language testing data}.
\newblock Educational Testing Service, Research Report, 2005.

\bibitem{Repeat}
E.~F. Vonesh and V.~G. Chinchilli.
\newblock {\em Linear and Nonlinear Models for the Analysis of Repeated
  Measurements}.
\newblock London: Chapman and Hall, 1997.

\bibitem{Walker}
S.G. Walker.
\newblock Sampling the dirichlet mixture model with slices.
\newblock {\em Communications in Statistics-Simulation and Computation},
  36(1):45--54, 2007.

\bibitem{Xu2}
G.~Xu.
\newblock Identifiability of restricted latent class models with binary
  responses.
\newblock {\em Annals of Statistics}, Accepted, 2016.

\bibitem{Xu1}
G.~Xu and Z.~Shang.
\newblock Identifying latent structures in restricted latent class models.
\newblock {\em Archive}, 2016.

\bibitem{Xu3}
G.~Xu and S.~Zhang.
\newblock Identifiability of diagnostic classification models.
\newblock {\em Psychometrika}, 81(3):625--649, 2016.

\end{thebibliography}

\newpage
\appendix
\section{Technical proofs}

\begin{lemma}[Kruskal \cite{Kruskal}]\label{Lem}

  Suppose $A,B,C,\bar{A},\bar{B},\bar{C}$ are six matrices with $R$ columns.
  There exist integers $I_0$, $J_0$, and $K_0$ such that $I_0+J_0+K_0 \geq 2R+2$.
  In addition, every $I_0$ columns of $A$ are linearly independent, every $J_0$ columns of $B$ are linearly independent, and every $K_0$ columns of $C$ are linearly independent.
  Define a triple product to be a three-way array $[A,B,C] = (d_{ijk})$ where $d_{ijk}=\sum_{r=1}^{R} a_{ir} b_{ir} c_{kr}$.
  Suppose that the following two triple products are equal $[A,B,C]=[\bar{A},\bar{B},\bar{C}]$. Then, there exists a column permutation matrix $P$,  we have $\bar{A}=AP\Lambda, \bar{B}=BPM, \bar{C}=CPN$, where $\Lambda, M, N$ are diagonal matrices such that $\Lambda MN =$ identity.
  Column permutation matrix is a matrix acts on the righthand side of another matrix and permutes the columns of that matrix.

\end{lemma}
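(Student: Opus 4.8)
The plan is to follow Kruskal's classical argument, which isolates the combinatorial core of the statement into a single \emph{permutation lemma} and reduces everything else to rank bookkeeping for linear combinations of slices. Write $k_A$ for the \emph{Kruskal rank} of $A$, i.e. the largest $k$ such that every $k$ columns of $A$ are linearly independent, and likewise $k_B,k_C$; taking the hypothesis at full strength we may assume $I_0=k_A$, $J_0=k_B$, $K_0=k_C$, so that $k_A+k_B+k_C\ge 2R+2$. The conclusion is symmetric under cyclically permuting the three factors (this merely transposes the three-way array), so it suffices to show the third factor is pinned down up to the unavoidable ambiguities, $\bar C=C\Pi D_C$ for some $R\times R$ permutation matrix $\Pi$ and nonsingular diagonal $D_C$; running the argument in the other two modes gives $\bar A=A\Pi_A D_A$ and $\bar B=B\Pi_B D_B$, and a final reconciliation step will force the permutations to agree and the diagonals to multiply to the identity. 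Two preliminary normalizations: the inequality forces each of $k_A,k_B,k_C$ to be at least $2$ (if, say, $k_C\le 1$ then $k_A+k_B\ge 2R+1$, impossible since $k_A,k_B\le R$), so no column of any factor is zero or proportional to another column; and linearly dependent rows in any mode correspond to redundant slices and may be discarded without changing the array or the Kruskal ranks, so we may assume each factor has full row rank.

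The workhorse is contraction in the third mode. For a weight vector $x$ indexing the third-mode slices, set $M(x):=\sum_k x_k T_{\cdot\cdot k}$; expanding both decompositions, $M(x)=A\,\mathrm{diag}(x^\top C)\,B^\top=\bar A\,\mathrm{diag}(x^\top\bar C)\,\bar B^\top$, so $M(x)$ is a sum of exactly $|\mathrm{supp}(x^\top C)|$ rank-one terms $a^{(r)}(b^{(r)})^\top$, and likewise on the barred side with $|\mathrm{supp}(x^\top\bar C)|$ terms. The key elementary fact is that once $s:=|\mathrm{supp}(x^\top C)|\le\min(k_A,k_B)$ the corresponding column submatrices of $A$ and $B$ have full column rank $s$, whence $\mathrm{rank}\,M(x)=s$ \emph{exactly} (and symmetrically for the barred system). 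Since $M(x)$ is intrinsic to the array, this ties the zero pattern of $x^\top C$ to that of $x^\top\bar C$. Choosing $x$ inside intersections of the coordinate hyperplanes cut out by the columns of $C$ (so that $x^\top C$ has prescribed small support), and counting dimensions against the budget $k_A+k_B+k_C\ge 2R+2$, one extracts the domination property: whenever $x^\top\bar C$ has at most $R-k_C+1$ nonzero entries, $x^\top C$ has no more nonzero entries than $x^\top\bar C$, and conversely. This is precisely the input required by Kruskal's permutation lemma.

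Kruskal's permutation lemma asserts that if two matrices $U,\bar U$ with $R$ columns and no zero columns satisfy this zero-pattern domination (relative to the Kruskal rank of $U$), then $\bar U=U\Pi D$ for a permutation $\Pi$ and nonsingular diagonal $D$. Applying it to $(C,\bar C)$ gives $\bar C=C\Pi_C D_C$, and cycling the construction through the other two modes gives $\bar A=A\Pi_A D_A$, $\bar B=B\Pi_B D_B$. Substituting these into $[A,B,C]=[\bar A,\bar B,\bar C]$ and invoking once more that $k_A,k_B,k_C\ge 2$ (so that, after aligning columns, distinct columns in each factor point in genuinely distinct directions and no rank-one term can be recombined out of different columns) forces $\Pi_A=\Pi_B=\Pi_C=:\Pi$ together with the scaling identity $D_AD_BD_C=I$, which is the assertion with $\Lambda=D_A$, $M=D_B$, $N=D_C$.

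The hard part will be two things. First, Kruskal's permutation lemma itself: its proof is an intricate---though wholly elementary---induction on $R$ (and on the number of distinct support sizes that arise), and I would either cite it as known or reproduce that induction in full. Second, verifying the lemma's hypotheses, i.e. the dimension count that converts $k_A+k_B+k_C\ge 2R+2$ into the zero-pattern domination between $x^\top C$ and $x^\top\bar C$; the delicacy there is that the clean identity $\mathrm{rank}\,M(x)=|\mathrm{supp}(x^\top C)|$ is only available while that support has size at most $\min(k_A,k_B)$, so one must show the decisive test vectors can be taken with small support-image, and it is exactly here that the full strength of all three Kruskal ranks---rather than just two of them---is consumed.
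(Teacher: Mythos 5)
The paper offers no proof of this statement: it is Kruskal's 1977 uniqueness theorem, quoted as a lemma and used as a black box (the citation \cite{Kruskal} is the ``proof''), so there is no in-paper argument to compare yours against. Your outline does correctly reproduce the architecture of the standard proof of Kruskal's theorem: contract the array against a weight vector $x$ in one mode to get $M(x)=A\,\mathrm{diag}(x^\top C)\,B^\top$, observe that $\mathrm{rank}\,M(x)$ equals the support size of $x^\top C$ as long as that support size does not exceed $\min(k_A,k_B)$, convert the budget $k_A+k_B+k_C\ge 2R+2$ into the zero-pattern domination between $x^\top C$ and $x^\top\bar C$, feed that into the permutation lemma to get $\bar C=C\Pi_C D_C$, cycle through the modes, and reconcile. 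Your preliminary observations ($k_A,k_B,k_C\ge 2$, hence no zero or proportional columns) are also correct.

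That said, as a proof it is not complete, and you are candid about exactly where: the permutation lemma and the dimension count establishing the domination hypothesis are named but not executed, and those two steps are where essentially all of the difficulty of Kruskal's theorem lives. The slice-rank identity and the final reconciliation are routine; the induction behind the permutation lemma and the careful choice of test vectors $x$ with controlled support-image (which is where the third Kruskal rank, not just two of them, gets consumed) are not. If this lemma were to be proved rather than cited, you would need to supply both in full; one small caution for that step is that the standard statement of the permutation lemma phrases the domination hypothesis in terms of the rank of one of the matrices rather than its Kruskal rank, so you should check which threshold your dimension count actually delivers before invoking it. Since the paper treats the result as known, citing Kruskal (or one of the later streamlined proofs) is the appropriate resolution here, and your sketch is consistent with that literature.
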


\begin{proof}[Proof of Theorem \ref{ThmBinaryBinary}]
For each item $j$,
let
$p_{j\aal} = P(Y_j = 1 | \aal).$
that takes two possible values. Let $p_{j-}$ or $p_{j+}$ be these two values.
 According to Lemma \ref{Lem}, it is sufficient to show that the $T$-matrices corresponding to the three subsets of items $T_{I_1}$, $T_{I_2}$, and $T_{I_3}$ are all of full column rank.

Suppose that there are $n_i$ items in $I_i$. For each item $j\in I_i$, define
$$\mathbf{P}_{j}=\left(
\begin{array}{cc}
p_{j-} & p_{j+} \\
1-p_{j-} & 1-p_{j+} \\
\end{array}\right).
$$
We further define
$$\mathcal{P}_i=\bigotimes_{j\in I_i} \mathbf{P}_j$$
which is a $2^{n_i}$ by $2^{n_i}$ matrix.
Because $p_{j-} \neq p_{j+}$, each $\mathbf{P}_{j}$ is a full-rank matrix and is of rank 2.
Thus, $\mathcal{P}_c$ is rank $2^{n_i}$ matrix and is a full-rank matrix.
Each column of $T_{I_i}$ is precisely one of the column vector in $\mathcal{P}_i$. In addition, there is no identical columns in $T_{I_i}$, thus its columns vectors  are linearly independent. Thus, $T_{I_i}$ is of full column rank.

We construct three groups of items $\tilde{I}_1=I_1, \tilde{I}_2=I_2$ and $\tilde{I}_3=\{1,\ldots,J \}\backslash (I_1 \bigcup I_2)$. These three groups are non-overlapping and $I_3 \subset \tilde{I}_3$. Notice that $T_{\tilde{I}_1}=T_{I_1}, T_{\tilde{I}_2}=T_{I_2}$ and $T_{I_3}$ is a submatrix of $T_{\tilde{I}_3}$. Therefore, $T_{\tilde{I}_1}, T_{\tilde{I}_2}$ and $T_{\tilde{I}_3}$ are all full column rank. We define
$$W = [T_{\tilde{I}_1}\Lambda, T_{\tilde{I}_2}, T_{\tilde{I}_3}],$$
where $\Lambda$ is a $M$ by $M$ diagonal matrix with $\pi_{\aal}$ being its $\aal$-th element. It is not hard to see that every entry of array $W$ corresponds to a probability $P(Y^1=y^1,\ldots, Y^J=y^J)$.

 Suppose that there is another decomposition of $W$ say $W=[T_{\tilde{I}_1}^{'}\Lambda^{'}, T_{\tilde{I}_2}^{'}, T_{\tilde{I}_3}^{'}]$. Notice that each $T_{\tilde{I}_i}$ has rank $M$ and $M+M+M \geq 2M+2$ provided $M \geq 2$. Then we apply Lemma \ref{Lem} and obtain that $T_{\tilde{I}_1}\Lambda=T_{\tilde{I}_1}^{'} \Lambda^{'} P A$, $T_{\tilde{I}_2}=T_{\tilde{I}_2}^{'} P B$, and $T_{\tilde{I}_3}=T_{\tilde{I}_3}^{'} P C$. Here, $A, B$, and $C$ are all diagonal matrix with $ABC = I$ and $I$ is an identity and $P$ is a column permutation matrix. Each column of $T_{\tilde{I}_i}$ and $T_{\tilde{I}_i}^{'}$ corresponds to a probability distribution and thus sums up to one. It means $A, B$ and $C$ must be identity matrix. Hence, we conclude that $T_{\tilde{I}_1} \Lambda=T_{\tilde{I}_1}^{'}\Lambda^{'}P$ which implies $\Lambda = P^{'}\Lambda^{'} P$. Then, we have $T_{\tilde{I}_1}=T_{\tilde{I}_1}^{'}P$, $T_{\tilde{I}_2}=T_{\tilde{I}_2}^{'}P$ and $T_{\tilde{I}_3}=T_{\tilde{I}_3}^{'}P$. This is equivalent that the item parameters $p_{j\aal}^k$ and the latent class population $\pi_{\aal}$ are identifiable up to a permutation of the class label.
\end{proof}

\bigskip

\begin{proof}[Proof of Corollary \ref{ThmB}]
Without loss of generality, we assume that the first, second, and third $K$ rows of $Q$ each form an identity matrix.
The attributes are binary and each of the first $3K$ items only depends on one attribute. Thus, their item response function $p_{j\aal}$ can only take two possible values.
Furthermore, we  divide these $3K$ items into 3 groups $I_1=\{1,\dots,K\}$, $I_2 = \{K+1,\cdots,2K\}$, and $I_3= \{2K+1,\cdots,3K\}$. It is straightforward to check that these three subsets of items satisfy condition A1 in Theorem \ref{ThmBinaryBinary}.
The corollary is an application of Theorem \ref{ThmBinaryBinary}.
\end{proof}

\bigskip

\begin{proof}[Proof of Theorem \ref{ThmBinary}]
Under condition $B1$, we define
\begin{equation}\label{pf2eq1}\nonumber
    \mathbf{P}_j = \left(
            \begin{array}{cc}
              p_{j-}^1 & p_{j+}^1 \\
              p_{j-}^2 & p_{j+}^2 \\
              \vdots & \vdots \\
              p_{j-}^{k_j} & p_{j+}^{k_j} \\
            \end{array}
          \right),
\end{equation}
whose column vectors are the two positive $P_{j\aal}$.
For each $I_i$, we define
$$\mathcal{P}_i=\bigotimes_{j\in I_i} \mathbf{P}_j$$
which is a $\prod\limits_{j\in I_i}k_j$ by $2^{n_i}$ matrix. $n_i$ is the number of items in $I_i$.
Each $\mathbf{P}_{j}$ is a full column rank matrix of rank 2.
Thus, $\mathcal{P}_i$ is rank of $2^{n_i}$ matrix and is a full column rank matrix.

 Each column vector of $T_{I_i}$ is a column vector of $\mathcal{P}_i$. We can show that for two classes $\aal_1 \neq \aal_2$, $\aal_1$-th and $\aal_2$-th columns of $T_{I_i}$ are not identical.
 We prove this by contradiction. Suppose that they are the same.
 It is easy to see that the $\aal_l$-th column in $T_{I_i}$ has the form
$$\bigotimes\limits_{j \in I_i}
 \left(
     \begin{array}{c}
       p_{j\aal_l}^1 \\
       p_{j\aal_l}^2 \\
       \vdots \\
       p_{j\aal_l}^{k_j} \\
     \end{array}
   \right),$$
$l=1,2$. So
\begin{equation}\label{pf2eq2}
 \bigotimes\limits_{j \in I_i}
 \left(
     \begin{array}{c}
       p_{j\aal_1}^1 \\
       p_{j\aal_1}^2 \\
       \vdots \\
       p_{j\aal_1}^{k_j} \\
     \end{array}
   \right)
   =
   \bigotimes\limits_{j \in I_i}
 \left(
     \begin{array}{c}
       p_{j\aal_2}^1 \\
       p_{j\aal_2}^2 \\
       \vdots \\
       p_{j\aal_2}^{k_j} \\
     \end{array}
   \right).
\end{equation}

However, we can find item $j^{\ast} \in I_i$ such that
$$p_{j^{\ast}\aal_1}^1 + ... + p_{j^{\ast}\aal_1}^k \neq p_{j\aal_2}^1 + ... + p_{j^{\ast}\aal_2}^k$$
for all $k=1,...,k_{j^{\ast}}-1$ which means
$$ \left(
     \begin{array}{c}
       p_{j^{\ast}\aal_1}^1  \\
       p_{j^{\ast}\aal_1}^2  \\
       \vdots  \\
       p_{j^{\ast}\aal_1}^{k_{j^{\ast}}}  \\
     \end{array}
   \right)
   \neq
   \left(
     \begin{array}{c}
       p_{j^{\ast}\aal_2}^1 \\
       p_{j^{\ast}\aal_2}^2 \\
       \vdots \\
       p_{j^{\ast}\aal_2}^{k_{j^{\ast}}} \\
     \end{array}
   \right)
.$$
It contradicts with equation (\ref{pf2eq2}) due to the fact that two different marginal distributions of item $j^{\ast}$ leads to the two different joint distributions. Hence, each column of $T_{I_i}$ is precisely one column of $\mathcal{P}_i$. $T_{I_i}$ is of full column rank with rank $M$ as a result.
Then $M+M+M \geq 2M+2$ whenever $M \geq 2$. We apply Lemma \ref{Lem} and use the same argument as in the proof of Theorem \ref{ThmBinaryBinary}.
\end{proof}
\bigskip

\begin{proof}[Proof of Theorem \ref{ThmGeneral}]
There exist three non-overlapp subsets of items $I_1,I_2$, and $I_3$ such that $I_1\bigcup I_2 \bigcup I_3=\{1,\ldots,J\}$. We write the three-way array $W=[T_{I_1}\Lambda,T_{I_2},T_{I_3}]$, where $T_{I_1},T_{I_2}$, and $T_{I_3}$ are the $T$-matrices of subsets $I_1, I_2$, and $I_3$ respectively and $\Lambda$ is a $\prod_{j=1}^{J}k_j$ by $\prod_{j=1}^{J}k_j$ diagonal matrix with $\aal$-th diagonal element being $\pi_{\aal}$. Thus $W$ is a $\prod_{j \in I_1} k_j$ by $\prod_{j \in I_2} k_j$ by $\prod_{j \in I_3} k_j$ array. It is not hard to see that $W(\yy_1,\yy_2,\yy_3) = \sum\limits_{\aal}t_{\yy_1\aal}^1 t_{\yy_2\aal}^2 t_{\yy_3\aal}^3$, where $t_{\yy_i \aal}^i$ is the $(\yy_i, \aal)$-th element of matrix $T_{I_i}$. In other words, $W(\yy_1,\yy_2,\yy_3)=P(\YY=(\yy_1,\yy_2,\yy_3))$.

Suppose that there exists another set of parameters of the model giving the same distribution; that is, another decomposition of $W=[T_{I_1}^{'} \Lambda^{'},T_{I_2}^{'},T_{I_3}^{'}]$. Because $T_{I_i}$ are all full column rank. By applying the Lemma \ref{Lem}, we have that $T_{I_1}\Lambda=T_{I_1}^{'} \Lambda^{'} P A$, $T_{I_2}=T_{I_2}^{'} P B$, and $T_{I_3}=T_{I_3}^{'} P C$. Here, $A, B$, and $C$ are all diagonal matrix with $ABC = I$ and $I$ is an identity and $P$ is a column permutation matrix.

The sum of each column of $T_{I_2},T_{I_3},T_{I_2}^{'},$ and $T_{I_3}^{'}$ equals 1. Then, $B$ and $C$ must be both identity matrices. As a result, $A$ is identity too. Due to the same reason that the sum of each column of $T_{I_1}$ and $T_{I_1}^{'}$ is 1, we have $\Lambda = P^{'} \Lambda^{'} P$ and $T_{I_1} = T_{I_1}^{'} P$. Besides, $T_{I_2}=T_{I_2}^{'} P$ and $T_{I_3}=T_{I_3}^{'} P$. We conclude that all parameters are identifiable up to a permutation of the columns.
\end{proof}

\bigskip

\begin{proof}[Proof of Theorem \ref{ThmGeneralDCM}]
According to Theorem \ref{ThmGeneral}, it is sufficient to find three non-overlap subsets of items $I_1, I_2$, and $I_3$ such that $I_1 \bigcup I_2 \bigcup I_3 = \{1,\ldots,J\}$ and their corresponding $T$-matrices $T_{I_1}, T_{I_2},T_{I_3}$ are all full column rank.

We construct the three subsets as follows: $I_1 = \bigcup_{k=1}^{K}I_{1,k}, I_2 = \bigcup_{k=1}^{K}I_{2,k}, I_3=J \backslash (I_1 \bigcup I_2)$.
Then we need to show that $I_1, I_2, I_3$ are non-overlap and their $T$-matrix $T_{I_1}, T_{I_2}, T_{I_3}$ are of full column rank.

 We know that $I_{i,k} \bigcap I_{j,l}=\emptyset$ for all $i\neq j$, $k\neq l, k,l \in \{1,\ldots,K\}$ and therefore $(\bigcup_{k=1}^{K}I_{i,k}) \bigcap$ $(\bigcup_{k=1}^{K}I_{j,k}) = \emptyset$. This also implies that $\bigcup_{k=1}^{K}I_{3,k} \subset I_3$ and $I_1 \bigcap I_2 =\emptyset = I_3 \bigcap I_1 = I_3 \bigcap I_2 = \emptyset$. Hence, $I_1, I_2$ and $I_3$ are non-overlap.

 Next, we need to prove $T_{I_i}, i=1,2,3$ are of full column rank. Notice that $\bigcup_{k=1}^{K}I_{i,k} \subset I_i$. Thus the rank of $T_{\bigcup_{k=1}^{K}I_{i,k}}$ is less than or equals the rank of $T_{I_3}$. Thus if we can prove $T_{\bigcup_{k=1}^{K}I_{i,k}}$ is of full column rank then $T_{I_3}$ is also of full column rank. As a result, we only need to show $T_{\bigcup_{k=1}^{K}I_{i,k}}$ are of full column rank.

 Recall that the class label $\aal=(\alpha^1,\ldots,\alpha^K)$ and $\alpha^k \in \{1,\ldots,d_k\}$. It is straightforward to see that $T_{\bigcup_{k=1}^{K}I_{i,k}}=\bigotimes_{k=1}^K T_{i,k}$ since each column of $\bigotimes_{k=1}^K T_{i,k}$ is indexed by $(\alpha^1,\ldots,\alpha^K)$ and each row in $\bigotimes_{k=1}^K T_{i,k}$ is indexed by all the possible values $(y^1,\ldots,y^K)$.  By the property of tensor product, the rank of $\bigotimes_{k=1}^K T_{i,k}$ equals the product of the rank of $T_{i,k}$. That is $ \hbox{rank}(\bigotimes_{k=1}^K T_{i,k}) = \prod_{k=1}^K d_k$. The number of columns in $\bigotimes_{k=1}^K T_{i,k}$ is also $\prod_{k=1}^K d_k$. Thus $T_{\bigcup_{k=1}^{K}I_{i,k}}$ is of full column rank.
\end{proof}

\bigskip

\begin{proof}[Proof of Theorem \ref{ThmPI}]
	It is sufficient to construct a consistent estimator of the partial information. Notice that the estimator does not have to be practically implementable.
The strategy is to first consider the maximum likelihood estimator and merge the estimated item response probabilities based on their asymptotic properties.

	Recall that $p_{j\aal}^y = P(Y^j = y |\aal)$  is the response probability to item $j$ for latent class $\aal$. Let
\begin{equation}\label{MLE}
L(\yy; \pp, \pi) =  \sum_{\aal \in \mA}\Big \{ \prod_{j=1}^J p_{j\aal}^{y^j} \pi_{\aal}\Big \}.
\end{equation}
be the likelihood of a single observation, where
$$\pp = (p_{j\aal}^{y^j}: 1\leq j \leq J, \aal \in \mathcal M, y^j \in \{1,...,k_j\})$$
and $\pi =(\pi _{\aal}: \aal \in \mathcal M)$.
Then, the maximum likelihood estimator is defined as
$$(\hat \pp,\hat \pi) =\arg\max_{\pp,\pi} \prod_{i=1}^n L(\yy_i; \pp, \pi).$$
According to the identifiability results in theorems and  the asymptotic property of the $M$-estimator (Chapter 5.1 of \cite{Van}), $(\hat \pp,\hat \pi) $ converges weakly to the true parameter. Furthermore, according to chapter 5.3 \cite{Van}, the MLE is asymptotically normally distributed. Thus, for each $j$, $\aal$, and $y$, we have
\begin{equation}\label{l2}
\hat p_{j\aal}^{y} - p_{j\aal}^{y} = O_p(n^{-1/2}).
\end{equation}
We say a random sequence $ a_n = O_p(n^{-1/2})$ if $\sqrt n a_n$ is tight.
Notice that the identifiability is subject to a permutation of the latent class labels.
To simplify notation, we assume that the class labels of $\hat{p}_{j\aal}^y$ have been arranged in an appropriate order. Otherwise, we need to write $\hat{p}_{j\aal}^y - p_{j\aal}^y = O_p(n^{-1/2})$. Thus, we proceed assuming that the permutation $\lambda$ is identity.

We now construct an estimator of the partial information for each item.
The basic idea is that if $[\aal_1]_j = [\aal_2]_j$, then $p_{j\aal_1}^{y}= p_{j\aal_2}^{y}$ for all $y$.
Together with \eqref{l2}, we have that
\begin{equation}\label{merge}
d_j (\aal_1,\aal_2) = \sum_{y=1}^{k_j} (\hat p_{j\lambda(\aal_1)}^{y} - \hat p_{j\lambda(\aal_2)}^{y})^2 = O_p(n^{-1}).
\end{equation}
Based on this fact, we define an equivalent class such that
\begin{equation}\label{eq}
\aal_1 \overset j = \aal_2 \quad \textrm{if} \quad d_j(\aal_1,\aal_2) \leq n^{-1/2}.
\end{equation}
Based on \eqref{merge}, we have that
$$P(\aal_1 \overset j = \aal_2) \to 1$$
as $n \to \infty$.
If $[\aal_1]_j \neq [\aal_2]_j$, then there exists an $\varepsilon$ and $y$ such that $(\hat p_{j\lambda(\aal_1)}^{y} - \hat p_{j\lambda(\aal_2)}^{y})^2 > \varepsilon$ and thus
$$P(\aal_1 \overset j \neq \aal_2) \to 1$$
as $n \to \infty$.
Let ``$\langle~~\rangle_j$'' be the canonical map of the estimated equivalence class as in \eqref{eq}.
Based on the above argument, we have that for each $j$,
$$P(\langle \aal \rangle_j = [\aal]_j) \to 1$$
as the sample size $n\to \infty$.
Hence, the estimation of equivalence classes is the same as the true one up to a permutation.
\end{proof}


\appendix
\section{Sliced sampler for latent class model}

We now present the sliced sampler for the simulation from the posterior distribution of model \eqref{like}, \eqref{priorP}, and \eqref{priorPi}.
The likelihood function is
$$\prod _{i=1}^n \Big \{\sum_{\aal =1}^\infty \pi_{\aal} \prod_{j=1}^J \prod_{y=1}^{k_j} (p_{j\aal}^y)^{I(y_{ij} = y)}\Big \}.$$
For each observation, we augment an independent index $u_i$ following the uniform distribution in $[0,1]$.
Thus, the complete data likelihood is
$$L(\pp,\pi; \yy_1,...,\yy_n, \aal_1,...,\aal_n, \uu)= \prod _{i=1}^n \Big \{ I(u_i < \pi_{\aal_i}) \prod_{j=1}^J \prod_{y=1}^{k_j} (p_{j\aal}^y)^{I(y_{ij} = y)}\Big \}.$$
With this augmentation scheme, a Gibbs sampler iterates according the following conditional distributions.

\begin{enumerate}
\item Update $u_i$, for $i=1,...,n$, by sampling from the conditional posterior, $U(0, \pi_{\aal_i})$.

\item For $h=1,...,M$ where $M = \max\{\aal_1,...,\aal_n\}$, update $\pp_{j\aal}$ from the full conditional posterior distribution
$$\textrm{Dirichlet}\Big(1+ \sum_{i:\aal_i = \aal} I(y_{ij}=1), ..., 1+ \sum_{i:\aal_i = \aal} I(y_{ij}=k_j)\Big)$$

\item For $h=1,...,M$, update $V_{\aal}$ from the conditional distribution that is Beta$(1,\beta)$ truncated to the interval
$$\left[\max_{i:\aal_i = \aal} \Big\{\frac{u_i}{\prod_{l<\aal} (1-V_l)}\Big\},
1-\max_{i:\aal_i >\aal} \Big\{\frac{u_i}{V_{\aal}\prod_{l<\aal,l \neq \aal} (1-V_l)}\Big\}\right].$$

\item Update each $\aal_i$ from the multinomial conditional distribution
$$P(\aal_i = \aal | ...) = \frac{I(\aal \in A_i) \prod_{j=1}^J p_{j\aal}^{y_{ij}}}{\sum_{l\in A_i} \prod_{j=1}^J p_{j\aal}^{y_{ij}}}$$
where $A_i = \{\aal: \pi_{\aal} > u_i\}$.

\item Assuming a gamma$(1,1)$ hyperprior for $\beta$, update $\beta$ by its conditional posterior
$$\hbox{gamma}(1+M,1-\sum\limits_{\aal=1}^{M}\log(1-V_{\aal})). $$
\end{enumerate}

\end{document}